\newcommand{\Z}{\mathbb{Z}}
\newcommand{\R}{\mathbb{R}}
\newcommand{\C}{\mathbb{C}}
\newcommand{\OreGen}{R[x;\sigma,\delta]}
\newcommand{\DiffPol}{R[x;\identity_R,\delta]}
\DeclareMathOperator{\End}{End}
\DeclareMathOperator{\identity}{id}
\DeclareMathOperator{\CHAR}{char}
\def\imod#1{\allowbreak\mkern10mu({\operator@font mod}\,\,#1)}
\theoremstyle{plain}
\newtheorem{theorem}{Theorem}[section]
\newtheorem{prop}[theorem]{Proposition}
\newtheorem{corollary}[theorem]{Corollary}
\theoremstyle{definition}
\newtheorem{definition}[theorem]{Definition}
\newtheorem{exmp}[theorem]{Example}
\newtheorem{remark}[theorem]{Remark}
\title{Burchnall-Chaundy theory for Ore extensions}
\author{Johan Richter % 
\footnote{Centre for Mathematical Sciences, Lund University, Box 118, 22100 Lund, Sweden, E-mail: johanr@maths.lth.se }
}
\begin{document}

\maketitle

\begin{abstract}
We begin by reviewing a classical result on the algebraic dependence of commuting elements in Weyl algebras. We proceed by describing generalizations of this result to various classes of Ore extensions, both results that have
already been published and a new result. 

\noindent \textbf{Keywords:} Ore extension, Centralizer, Algebraic dependence

\noindent \textbf{Mathematical Subject Classification 2010:} 126S32, 16S36

\end{abstract}

\section{Introduction}

Let $R$ be a commutative ring and $S$ an $R$-algebra. Let $a,b$ be two commuting elements of $S$. We are interested in the question whether they are algebraically dependent over $R$. I.e., does there exist a non-zero polynomial 
$f(s,t)\in R[s,t]$ such that $f(a,b)=0$? Furthermore, can we find a proper subring $F$ of $R$ such that $a,b$ are algebraically dependent over $F$? 

In this article $S$ will typically be an Ore extension of $R$. We start by introducing the notations and conventions we will use in this article and define what an Ore extension is. After that we review without giving proofs results
obtained by other authors for the case that $S$ is a differential operator ring (a special case of Ore extensions). We then proceed to describe results obtained by the present author and his collaborators and we finish by describing 
a strengthening of these results we recently obtained.

\subsection{Notation and conventions}

$\R$ will denote the field of real numbers, $\C$ the field of complex numbers. $\Z$ will denote the integers. 

If $R$ is a ring then $R[x_1,x_2, \ldots x_n]$ denotes the ring of polynomials over $R$ in central indeterminates $x_1,x_2,\ldots,x_n$. 

By a ring we will always mean an associative and unital ring. All morphisms between rings are assumed to map the multiplicative identity element to the multiplicative identity element.  

By an ideal we shall mean a two-sided ideal.

If $R$ is a ring we can regard it as a module (indeed algebra) over $\Z$ by defining $0r =0$, $nr= \sum_{i=1}^n r$ if $n> 0$ and $nr = -(-n)r$ if $n$ is a negative integer. If there is a positive integer $n$ such that $n1_{R}=0$,
we call the smallest such positive integer the \emph{characteristic} of $R$ and denote it $\CHAR(R)$. If no such integer exists we set $\CHAR(R)=0$.  

Let $R$ be a commutative ring and $S$ an $R$-algebra. Two commuting elements, $p,q \in S$, are said to be \emph{algebraically dependent} (over $R$) if there is a non-zero polynomial, $f(s,t) \in R[s,t]$, such that $f(p,q)=0$, in which 
case $f$ is called an annihilating polynomial. 

If $S$ is a ring and $a$ is an element in $S$, the \emph{centralizer} of $a$, denoted $C_S(a)$, is the set of all elements in $S$ that commute with $a$. 

This article studies a class of rings called Ore extensions. For general references on Ore extensions, see e.g. \cite{CGoodearl2004,CRowenRing1}. We shall briefly recall the definition. 
If $R$ is a ring and $\sigma$ is an endomorphism of $R$, then an additive map $\delta: R \to R$ is said to be a $\sigma$-\emph{derivation} if 
\begin{displaymath}
 \delta(ab) = \sigma(a)\delta(b)+\delta(a)b
\end{displaymath}
holds for all $a,b \in R$.

\begin{definition}
Let $R$ be a ring, $\sigma$ an endomorphism of $R$ and $\delta$ a $\sigma$-derivation. The \emph{Ore extension} $\OreGen$ is defined as the ring generated by $R$ and an element $x \notin R$ such that $1,x, x^2, \ldots$ form a
 basis for $\OreGen$ as a left $R$-module and all $r \in R$ satisfy
\begin{equation}\label{CMultiplicationRule}
	x r = \sigma(r)x + \delta(r).
\end{equation}
\end{definition}
Such a ring always exists and is unique up to isomorphism (see \cite{CGoodearl2004}). 
From $\delta(1 \cdot 1) = \sigma(1) \cdot 1 +\delta(1) \cdot 1$ we get that $\delta(1)=0$, and since $\sigma(1)=1$ we see that $1_R$ will be a multiplicative identity for $\OreGen$ as well. 

Any element $r$ of $R$ such that $\sigma(r)=r$ and $\delta(r)=0$ will be called a \emph{constant}. In any ring with an endomorphism $\sigma$ and a $\sigma$-derivation $\delta$ the constants form a subring. 

If $\sigma= \identity_R$, then a $\sigma$-derivation is simply called a derivation and $\DiffPol$ is called a \emph{differential operator ring}.

An arbitrary non-zero element $P \in \OreGen$ can be written uniquely
as $P = \sum_{i=0}^n a_i x^i$ for some $n \in \Z_{\geq 0}$, with $a_i \in R$ for $i \in \{0,1,\ldots, n\}$ and $a_n \neq 0$. The \emph{degree} of $P$ will be defined as $\deg(P):=n$. We set $\deg(0) := -\infty$.

\section{Burchnall-Chaundy theory for differential operator rings}\label{sec_BC}

We shall begin by describing some results on the algebraic dependence of commuting elements in differential operator rings. As the title of this subsection suggests, this sort of question has its origin in a series of papers by the 
British mathematicians Joseph Burchnall and Theodore Chaundy \cite{BurchnallChaundy1,BurchnallChaundy2,BurchnallChaundy3}.

\begin{prop}
 Let $R$ be a ring and $\delta: R \to R$ a derivation. Let $C$ be the set of constants of $\delta$. Then 
\begin{enumerate}[{\rm (i)}]
 \item $1 \in C$;
 \item $C$ is a subring of $R$, called the ring of constants;
 \item for any $c \in C$ and $r \in R$ we have
\begin{align*}
 \delta(cr) &= c \delta(r), \\
 \delta(rc) &= \delta(r) c.  
\end{align*}

\end{enumerate}

\end{prop}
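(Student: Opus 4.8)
The plan is to obtain all three assertions directly from the additivity of $\delta$ and the Leibniz identity $\delta(ab) = a\,\delta(b) + \delta(a)\,b$, with no machinery beyond what is recalled above; I would treat them in the order (i), (ii), (iii), since the first claim and the additivity consequences $\delta(0) = 0$ and $\delta(-r) = -\delta(r)$ are exactly what is needed to conclude that $C$ is a subring. For (i): putting $a = b = 1$ in the Leibniz identity gives $\delta(1) = \delta(1) + \delta(1)$, hence $\delta(1) = 0$ (already noted in the excerpt), and since for a derivation $\sigma = \identity_R$ automatically fixes $1$, the element $1$ is a constant, so $1 \in C$ and in particular $C$ is nonempty.

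For (ii), let $a, b \in C$. From $\delta(0) = \delta(0 + 0) = \delta(0) + \delta(0)$ we get $\delta(0) = 0$, and then additivity gives $\delta(-b) = -\delta(b)$, so $\delta(a - b) = \delta(a) - \delta(b) = 0$ and $C$ is closed under subtraction. The Leibniz identity gives $\delta(ab) = a\,\delta(b) + \delta(a)\,b = 0$, so $C$ is closed under multiplication; together with (i) this shows $C$ is a unital subring of $R$, which recovers in the present special case the remark made earlier that the constants of any $\sigma$-derivation form a subring.

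For (iii), fix $c \in C$ and $r \in R$; applying the Leibniz identity and substituting $\delta(c) = 0$ yields
\begin{align*}
\delta(cr) &= c\,\delta(r) + \delta(c)\,r = c\,\delta(r), \\
\delta(rc) &= r\,\delta(c) + \delta(r)\,c = \delta(r)\,c,
\end{align*}
which are exactly the claimed identities. I do not anticipate any genuine obstacle here: each step is a single substitution, and the only point needing a moment's care is to prove $\delta(1) = 0$ and the sign rule $\delta(-r) = -\delta(r)$ before using them in (ii). In fact (i)--(iii) hold verbatim for an arbitrary $\sigma$-derivation, replacing $a\,\delta(b) + \delta(a)\,b$ by $\sigma(a)\,\delta(b) + \delta(a)\,b$ and using that a constant $c$ satisfies $\sigma(c) = c$, so restricting to a derivation loses nothing.
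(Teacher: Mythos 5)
Your proof is correct and follows exactly the route the paper intends (the published version simply says ``we skip the simple calculational proof,'' and the author's own suppressed calculation is the same: $\delta(1)=0$ from the Leibniz rule, closure of $C$ under the ring operations, and substitution of $\delta(c)=0$ for part (iii)). Nothing further is needed.
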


\begin{proof}
We skip the simple calculational proof. 
\end{proof}

\begin{comment}
\begin{proof}
\begin{enumerate}[{\rm (i)}]
 \item \begin{displaymath} 
        \delta(1) = \delta(1 \cdot 1) = 1\delta(1) +\delta(1)1 = \delta(1)+\delta(1) \Rightarrow \delta(1)=0.
       \end{displaymath}
\item If $a,b \in C$ it follows from additivity that $a+b$ is a constant as well. To prove that $ab \in C$ we use the Leibniz rule as follows:
\begin{displaymath}
 \delta(ab) = a\delta(b)+\delta(a)b=0+0=0.
\end{displaymath}
Since $1$ and $0$ are constants it is clear that $C$ is a subring. 

\item Let $c$ be a constant and $r$ any element of $R$. A short calculation gives
\begin{displaymath}
 \delta(cr) = c\delta(r)+\delta(c)r = c\delta(r). 
\end{displaymath}
The other claim is proved analogously.
\end{enumerate}
\end{proof}
\end{comment}

As expected any derivation satisfies a version of the quotient rule. 

\begin{prop}
 Let $R$ be a ring with a derivation, $\delta$, and let $a$ be any invertible element of $R$. Then
\begin{displaymath}
 \delta(a^{-1}) = -a^{-1}\delta(a)a^{-1}.
\end{displaymath}
\end{prop}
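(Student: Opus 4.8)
The plan is to differentiate the identity $a a^{-1} = 1$. Since $\delta$ is a derivation (so $\sigma = \identity_R$), the Leibniz rule gives $\delta(a a^{-1}) = a\,\delta(a^{-1}) + \delta(a)\,a^{-1}$, while the previous proposition tells us that $\delta(1) = 0$. Hence we obtain the relation
\begin{displaymath}
 a\,\delta(a^{-1}) + \delta(a)\,a^{-1} = 0.
\end{displaymath}

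From here I would simply solve for $\delta(a^{-1})$: rearranging gives $a\,\delta(a^{-1}) = -\delta(a)\,a^{-1}$, and left-multiplying both sides by $a^{-1}$ yields $\delta(a^{-1}) = -a^{-1}\delta(a)a^{-1}$, which is exactly the asserted formula. The one point that requires a little attention is that $R$ need not be commutative, so one must multiply by $a^{-1}$ on the correct (left) side and must not reorder $\delta(a)$ relative to $a^{-1}$; no commutativity is used anywhere.

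There is essentially no real obstacle in this argument — the only thing to be careful about is the non-commutative bookkeeping just mentioned. As a consistency check one can instead differentiate $a^{-1} a = 1$, obtaining $a^{-1}\delta(a) + \delta(a^{-1})a = 0$, and then right-multiply by $a^{-1}$; this gives the same expression $\delta(a^{-1}) = -a^{-1}\delta(a)a^{-1}$, confirming the computation from the other side.
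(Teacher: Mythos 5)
Your proof is correct and is essentially the paper's argument: both apply the Leibniz rule to the identity $1 = a\,a^{-1}$ (the paper uses $a^{-1}a$, which you also note as your consistency check) and solve for $\delta(a^{-1})$ using $\delta(1)=0$. The care you take with the non-commutative bookkeeping is exactly right.
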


\begin{proof}
\begin{displaymath}
 0 = \delta(1) = \delta(a^{-1}a) = a^{-1}\delta(a)+\delta(a^{-1})a \Rightarrow \delta(a^{-1}) =  -a^{-1}\delta(a)a^{-1}.
\end{displaymath}
 \end{proof}

\begin{corollary}
 Let $R$ be a ring with a derivation $\delta$ and $C$ its ring of constants. If $a$ is an invertible element that lies in $C$, then so does $a^{-1}$. If $R$ is a field, then $C$ is a subfield of $R$. 
\end{corollary}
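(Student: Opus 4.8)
The plan is to prove the two assertions of the corollary in turn, using only the quotient rule from the preceding proposition together with the definition of the ring of constants.

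First I would handle the statement about invertible constants. Let $a \in C$ be invertible in $R$. By the previous proposition, $\delta(a^{-1}) = -a^{-1}\delta(a)a^{-1}$, and since $a$ is a constant we have $\delta(a) = 0$, so $\delta(a^{-1}) = 0$. It remains to check $\sigma(a^{-1}) = a^{-1}$ — but wait, in this section $\sigma = \identity_R$ since we are dealing with derivations of differential operator rings, so that condition is automatic and $a^{-1}$ is indeed a constant. (If one prefers to be careful about whether ``constant'' here means the $\delta$-only notion from the first proposition of this section, then there is nothing further to verify: $\delta(a^{-1}) = 0$ is the whole claim.)

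Next I would treat the case where $R$ is a field. We already know from the earlier proposition that $C$ is a subring of $R$ containing $1$ and $0$, so to show $C$ is a subfield it suffices to show that every nonzero element of $C$ has its inverse in $C$. But in a field every nonzero element is invertible in $R$, so this is exactly the first part of the corollary applied to each nonzero $a \in C$. Hence $C$ is closed under inversion and is a subfield.

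There is no real obstacle here; the only point requiring a moment's attention is making sure the field $R$ is commutative enough for ``subfield'' to make sense, but a field is by convention a commutative division ring, so $C$, being a subring of a commutative ring, is automatically commutative, and the argument goes through. I would write the whole thing in three or four lines.
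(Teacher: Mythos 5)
Your argument is correct and is exactly the one the paper intends: the corollary is stated right after the quotient rule $\delta(a^{-1})=-a^{-1}\delta(a)a^{-1}$ precisely so that $\delta(a)=0$ immediately gives $\delta(a^{-1})=0$, and the field case follows since every nonzero element of $C$ is then invertible with inverse in $C$. The paper leaves this proof to the reader, and your three-line version fills it in faithfully.
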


\begin{exmp}\label{C-infty}
As the ring $R$ we can take $C^{\infty}(\R,\C)$, the ring of all infinitely many times differentiable complex-valued functions on the real line. For $\delta$ we can take the usual derivative. 
The ring of constants in this case will consist of 
the constant functions.
\end{exmp}

With $R$ and $\delta$ as in Example \ref{C-infty} we can form the differential operator ring $\DiffPol$. We will show that the name ``differential operator ring'' is apt by constructing a ring of concrete differential operators
that is isomorphic to $\DiffPol$. 

The ring $R=C^{\infty}(\R,\C)$ can be seen as a vector space over $\C$, with operations defined pointwise. So we can consider the ring $\End_{\C}(R)$ of all linear endomorphisms of $R$. (Note that 
the endomorphisms are not required to be multiplicative.)
 $\End_{\C}(R)$ is in turn an 
algebra over $R$.
One of the operators in $\End_{\C}(R)$ is the derivation operator, which we denote by $D$. Furthermore, for any $f \in R$ there is the multiplication operator $M_f$ that maps any function $g \in R$ to $fg$. 
The operator $D$ and all the $M_f$ together generate a subalgebra of $\End_{\C}(R)$, which we denote by $T$.

It is clear that the set of all $M_f$, for $f \in R$, is a subalgebra of $T$, isomorphic to $R$. Thus we abuse notation and identify $M_f$ with $f$. By doing this we can write any element of $T$ as a finite sum, 
$\sum_{i=0}^n a_i D^i$,  
where each $a_i$ is a function in $C^{\infty}(\R,\C)$. Furthermore such a decomposition is unique, or in other words: the powers of $D$ form a basis for $T$ as a free module over $R$.

We now compute the commutator of $D$ and $f$ for any $f \in R$. We temporarily revert to writing $M_f$ for the element in $T$ to make our calculations easier to understand. Let $g$ be an arbitrary function in $R$. We find that 
\begin{eqnarray*}
 (D M_f -M_f D)(g) = D M_f(g) -M_f D(g) = D(fg) -M_f(g') = \\
 f'g+fg'-fg'= f'g= M_{\delta(f)}(g). 
\end{eqnarray*}
Hence
\begin{displaymath}
 D M_f - M_f D = M_{\delta(f)}.
\end{displaymath}
Relapsing into our abuse of notation we write this as $ Df-fD= \delta(f)$ or equivalently as $Df = fD +\delta(f)$. 

Denote the identity function on the real line by $y$. Then $Dy -yD= 1$, a relation known as the Heisenberg relation. The elements $y$ and $D$ together generate a subalgebra of $T$ known as the Weyl algebra or 
the Heisenberg algebra, which is of 
interest in quantum mechanics, among other areas.

Any element, $P$, of $T$ can be written as $P = \sum_{i=0}^n p_i D^i$, for some non-negative integer $n$ and some  $p_i\in C^{\infty}(\R,\C)$.  Conversely every such sum is an element of $T$. Thus $T$ is isomorphic to $\DiffPol$ 
with $R$ and $\delta$ defined as in Example \ref{C-infty}.

In a series of papers in the 1920s and 30s \cite{BurchnallChaundy1,BurchnallChaundy2, BurchnallChaundy3}, Burchnall and Chaundy studied the properties of commuting pairs of ordinary differential operators. In our terminology they may be said to study the properties of pairs of commuting 
elements of $T$. (They do not specify what function space their differential operators are supposed to act on.) The following theorem is essentially found in their papers. 
\begin{theorem}\label{thm_BC}
 Let $P=\sum_{i=0}^n p_i D^i$ and $Q= \sum_{j=0}^{m} q_j D^j$ be two commuting elements of $T$ with constant leading coefficients. Then there is a non-zero polynomial $f(s,t)$ in two commuting variables over $\C$ such that 
$f(P,Q) =0$. Note that the fact that $P$ and $Q$ commute 
guarantees that $f(P,Q)$ is well-defined.
\end{theorem}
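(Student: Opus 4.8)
The plan is to study the centralizer $C_T(P)$, show it is finitely generated as a module over the commutative subring $\C[P]$, and then invoke the Cayley--Hamilton / determinant trick to produce the annihilating polynomial. First I would dispose of two trivial cases. If $\deg P = 0$ then $P$ is a nonzero complex number and $f(s,t) = s - P$ works; if $P$ is algebraic over $\C$, an annihilating polynomial $g$ of $P$ gives $f(s,t) = g(s)$. So assume $\deg P = n \geq 1$ and that $P$ is transcendental over $\C$, so $\C[P]$ is a genuine polynomial ring; after rescaling we may also take the (constant) leading coefficient of $P$ to be $1$.

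The technical heart is the following lemma: if $S = \sum_{i=0}^{d} s_i D^i$ is a nonzero element of $C_T(P)$, then its leading coefficient $s_d$ is constant, i.e. $s_d \in \C$. To prove it one expands $PS = SP$ using $D^k r = \sum_{l} \binom{k}{l} r^{(l)} D^{k-l}$ and compares the coefficients of $D^{n+d-1}$ on the two sides. Because the leading coefficient of $P$ is constant, every contribution cancels except a single term $n s_d' D^{n+d-1}$ arising from $PS$; hence $s_d' = 0$.

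Using the lemma I would prove that $C_T(P) = \sum_{r=0}^{n-1} \C[P]\,S_r$ for suitably chosen $S_r \in C_T(P)$, so that $C_T(P)$ is generated as a $\C[P]$-module by at most $n$ elements. For each residue class mod $n$ that is realised as the degree of some nonzero element of $C_T(P)$, fix such an element $S_r$ of minimal degree $d_r$ in that class (take $S_0 = 1$). Given an arbitrary nonzero $S \in C_T(P)$ with $\deg S = d \equiv r \pmod{n}$, we have $d \geq d_r$ and $d - d_r = qn$ for some $q \geq 0$; then $P^q S_r \in C_T(P)$ has degree $d$, and by the lemma both the leading coefficient of $S$ and that of $P^q S_r$ are nonzero constants, so subtracting the appropriate scalar multiple of $P^q S_r$ from $S$ yields an element of $C_T(P)$ of strictly smaller degree. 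Induction on $\deg S$ then gives the claim.

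Finally, $Q \in C_T(P)$, and $\C[P]$ is central in $C_T(P)$ (anything commuting with $P$ commutes with every polynomial in $P$), so multiplication by $Q$ is a $\C[P]$-linear endomorphism of the finitely generated $\C[P]$-module $C_T(P)$. By the determinant trick it satisfies a monic equation $Q^k + c_{k-1}(P)Q^{k-1} + \cdots + c_0(P) = 0$ with $c_j \in \C[P]$ (apply the Cayley--Hamilton relation for this endomorphism to the element $1 \in C_T(P)$). Setting $f(s,t) = t^k + c_{k-1}(s)t^{k-1} + \cdots + c_0(s) \in \C[s,t]$, which is nonzero because it is monic in $t$, we obtain $f(P,Q) = 0$, as required. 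The step I expect to be the main obstacle is the module-finiteness argument: one must choose the residue-indexed generators correctly and verify that the leading-coefficient lemma really forces the degree to drop at each stage. As an aside, Burchnall and Chaundy's original route was more analytic --- for each $\lambda \in \C$ the operator $Q$ preserves the $n$-dimensional solution space $\ker(P - \lambda) \subseteq C^{\infty}(\R,\C)$, and the characteristic polynomial of $Q$ on that space (after one checks its coefficients are polynomial in $\lambda$) provides $f$ --- but that approach carries the extra burden of showing that the operator $f(P,Q)$, which a priori only annihilates all these solution spaces, is in fact the zero operator.
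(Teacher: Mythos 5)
Your proof is correct, and its core is the same mechanism the paper uses: the paper justifies this theorem by appealing to Goodearl's Theorem~\ref{GoodearlThm} (noting that $C^{\infty}(\R,\C)$ is semiprime commutative with field of constants $\C$), and the engine behind that result is exactly the residue-classes-mod-$n$ argument reproduced in the paper's proof of Theorem~\ref{thm_Centralizer}. Your module-finiteness step is that argument verbatim. Where you genuinely diverge is in the two surrounding pieces. First, you make the argument self-contained by proving directly, via the commutator computation on the coefficient of $D^{n+d-1}$, that every element of $C_T(P)$ has constant leading coefficient; this is precisely the hypothesis that Theorem~\ref{thm_Centralizer} takes as an assumption and that the paper otherwise outsources to Goodearl's invertibility/semiprimeness conditions. (Your computation checks out: with $p_n$ constant the coefficient of $D^{n+d-1}$ in $PS-SP$ collapses to $n s_d'$, and the constancy of the leading coefficients is also exactly what guarantees $\deg(P^qS_r)=\deg S$ in the non-domain $C^{\infty}(\R,\C)$, a point you rightly flag.) Second, you finish with the Cayley--Hamilton determinant trick applied to left multiplication by $Q$ on the finitely generated $\C[P]$-module $C_T(P)$, whereas the paper (Corollary~\ref{cor_Amitsur}) uses freeness of rank at most $n$ to conclude that $1,Q,\dots,Q^n$ are $\C[P]$-linearly dependent. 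Both are valid; the paper's route needs the module to be free but gives the explicit degree bound $n$ in $t$, while yours needs only finite generation and has the small bonus of producing an $f$ monic in $t$.
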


The result of Burchnall and Chaundy was rediscovered independently during the 1970s by researchers in the area of PDEs. It turns out that several important PDEs are equivalent to the condition that a pair of differential 
operators commute. These differential equations are completely integrable as a result, which roughly means that they possess an infinite number of conservation
laws.

Burchnall's and Chaundy's work rely on analytical facts, such as the existence theorem for solutions of linear ordinary differential equations. However, it is possible to give algebraic proofs for the existence of 
the annihilating polynomial. 
This was done later by authors such as Amitsur \cite{Amitsur} and Goodearl \cite{GoodearlPseudo, GoodearlCarlson}. Once one casts Burchnall's and Chaundy's results in an algebraic form one can also generalize them
to a broader class of rings.

More specifically, one can prove Burchnall's and Chaundy's result for certain differential operator rings.
 
Amitsur \cite[Theorem 1]{Amitsur} (following work of Flanders \cite{Flanders}) studied the case when $R$ is a field of characteristic zero and $\delta$ is an arbitrary derivation on $R$. He obtained the following theorem. 
\begin{theorem}\label{thm_DiffField}
 Let $k$ be a field of characteristic zero with a derivation $\delta$. Let $F$ denote the subfield of constants. Form the differential operator ring $S=k[x;\identity,\delta]$, and let $P$ be an element of $S$ of 
degree $n$. Denote by by $F[P]$ the ring of polynomials in $P$ with constant coefficients, $F[P]= \{ \sum_{j=0}^m b_j P^j \ | \ b_j \in F \ \}$. 
Then $C_S(P)$ is a commutative subring of
$S$ and a free $F[P]$-module of rank at most $n$.  
\end{theorem}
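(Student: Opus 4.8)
The plan is to exploit the additive degree function on $S = k[x;\identity,\delta]$ together with the fact that, since $\sigma = \identity$ and $\delta$ kills $F$, the constant field $F$ is central in $S$; thus $F[P]$ is a commutative subring and $C_S(P)$ is naturally a left $F[P]$-module. First I would dispose of the routine preliminaries: $S$ is a domain and $\deg$ is additive because $k$ is a field; since $\deg P = n \ge 1$ the element $P$ is transcendental over $F$ (a polynomial relation over $F$ would have an uncancellable leading term), so $F[P] \cong F[t]$ is a principal ideal domain; and $C_S(P)$, a subring of the domain $S$ containing $F[P]$, is torsion-free over $F[P]$. Hence it suffices to show that $C_S(P)$ is generated over $F[P]$ by at most $n$ elements.

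The computational heart is a leading-coefficient lemma: if $0 \ne Q \in C_S(P)$ has degree $m$ and leading coefficient $\ell$, and $p_n$ denotes the leading coefficient of $P$, then $\ell^{n}/p_n^{m} \in F$. I would obtain this by comparing the coefficients of $x^{n+m-1}$ on the two sides of $PQ = QP$: the contributions coming from commuting scalars cancel, leaving $n\,p_n\,\delta(\ell) = m\,\ell\,\delta(p_n)$, which says exactly that $\delta(\ell^{n}/p_n^{m}) = 0$. The characteristic-zero hypothesis enters only through the resulting fact that any $u \in k$ with $u^{n} \in F$ is a constant (since $n u^{n-1}\delta(u) = 0$ forces $\delta(u) = 0$). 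Two corollaries follow: every degree-zero element of $C_S(P)$ lies in $F$, and any two elements of $C_S(P)$ of the same degree have leading coefficients differing by a factor in $F$.

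With the lemma I would run a division argument modulo $F[P]$. For each residue class $r$ modulo $n$ that occurs as a degree of an element of $C_S(P)$, fix $Q_r \in C_S(P)$ of least degree $m_r \equiv r \pmod{n}$ (so $Q_0 = 1$); there are at most $n$ of these. For an arbitrary nonzero $Q \in C_S(P)$ with $\deg Q = m \equiv r \pmod{n}$, write $m - m_r = kn$; then $P^k Q_r \in C_S(P)$ has degree $m$, so by the second corollary some $\lambda \in F$ makes $Q - \lambda P^k Q_r$ an element of $C_S(P)$ of strictly smaller degree, and induction on the degree shows the $Q_r$ generate $C_S(P)$ over $F[P]$. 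They are $F[P]$-independent because in a relation $\sum_r f_r(P)Q_r = 0$ the nonzero summands have degrees $n\deg f_r + m_r$ in pairwise distinct residue classes modulo $n$ and so cannot cancel. Hence $C_S(P)$ is free over $F[P]$ of rank at most $n$.

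The remaining point, commutativity, I expect to be the genuine obstacle, since it does not follow from the module structure alone (there exist noncommutative domains that are free of finite rank over a central polynomial subring). I would localize $C_S(P)$ at the nonzero elements of $F[P]$, obtaining $L := C_S(P) \otimes_{F[P]} F(P)$, where $F(P)$ is the fraction field of $F[P]$: this is a domain, finite-dimensional over the field $F(P)$, hence a division ring, with $F(P)$ central, and $-\deg$ extends to a discrete valuation on $L$. Let $e$ be the gcd of the degrees occurring in $C_S(P)$ (so $e \mid n$, since $n$ occurs); using that this set of degrees is closed under addition one picks $Q \in C_S(P)$ with $\gcd(\deg Q, n) = e$. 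Then $F(P)(Q) \subseteq L$ is a field whose value group contains $e\Z$, so its ramification index over $F(P)$ (value group $n\Z$) is at least $n/e$, giving $[F(P)(Q):F(P)] \ge n/e$; on the other hand $\dim_{F(P)} L$ equals the number of residues modulo $n$ of the occurring degrees, all of which are multiples of $e$, so $\dim_{F(P)} L \le n/e$. Comparing, $L = F(P)(Q)$ is commutative, and therefore so is its subring $C_S(P)$. The delicate steps here are the numerical-semigroup argument producing $Q$ of that exact degree and the valuation-theoretic inequality; everything else is degree bookkeeping.
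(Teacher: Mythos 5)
Your proposal is correct, but the comparison here is slightly unusual: the paper does not prove Theorem~\ref{thm_DiffField} at all --- it is quoted from Amitsur --- and the closest thing to ``the paper's proof'' is the argument given for Theorem~\ref{thm_Centralizer}. Your freeness half coincides with that argument exactly (minimal-degree representatives $Q_r$ in each residue class modulo $n$, reduction of an arbitrary element by $\lambda P^k Q_r$, independence because nonzero summands land in distinct residue classes); your leading-coefficient lemma, obtained by comparing the $x^{n+m-1}$ coefficients of $PQ$ and $QP$ to get $n p_n\delta(\ell)=m\ell\delta(p_n)$ and hence $\ell^n/p_n^m\in F$, is precisely the verification of the hypothesis of Theorem~\ref{thm_Centralizer} in the differential case, and it correctly isolates where characteristic zero enters. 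The commutativity half is where you genuinely depart from the classical route: Amitsur (following Schur and Flanders) embeds $S$ into formal pseudo-differential operators, extracts an $n$-th root of $P$, and locates $C_S(P)$ inside a commutative Laurent-series field, whereas you localize at the central set $F[P]\setminus\{0\}$ to get a division ring $L$ finite over $F(P)$ and then squeeze $\dim_{F(P)}L$ between the ramification bound $[F(P)(Q):F(P)]\ge n/e$ and the residue count $\le n/e$. I checked the two steps you flag as delicate and they do go through: elements whose values lie in distinct cosets of $v(F(P)^*)=n\Z$ are $F(P)$-independent (so $1,Q,\dots,Q^{n/e-1}$ already give the lower bound, since $\gcd(\deg Q,n)=e$ makes their values pairwise incongruent modulo $n$), and the required $Q$ exists by a Chinese-remainder argument in the numerical semigroup of occurring degrees (for each prime $p\mid n/e$ some occurring degree has $p\nmid d/e$, and a suitable non-negative combination, realized as a product in $C_S(P)$, avoids all these primes simultaneously). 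What your approach buys is a proof of commutativity that reuses the freeness already established and needs no pseudo-differential completion; what it costs is the valuation-theoretic and semigroup bookkeeping, which the classical embedding argument hides inside the construction of $P^{1/n}$.
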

 
The next corollary can be found in \cite[Corollary 2]{Amitsur}. 

\begin{corollary}\label{cor_Amitsur}
 Let $P$ and $Q$ be two commuting elements of $k[x;\identity, \delta]$, where $k$ is a field of characteristic zero. Then there is a nonzero polynomial $f(s,t)$, with coefficients in $F$, such that $f(P,Q)=0$. 
\end{corollary}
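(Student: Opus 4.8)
The plan is to obtain the corollary as an immediate consequence of Theorem~\ref{thm_DiffField}, which carries all the substance; what remains is pure linear algebra over a polynomial ring. Set $S := k[x;\identity,\delta]$. First I would reduce to the case $n := \deg P \geq 1$: if $\deg P = 0$ but $\deg Q \geq 1$ one simply interchanges the roles of $P$ and $Q$, and if $\deg P = \deg Q = 0$ then both elements lie in $k$, in which case $P$ and $Q$ are automatically algebraically dependent over $F$ because in characteristic zero the transcendence degree of $k$ over its field of constants is at most $1$ (the derivation $\delta$ vanishes on $F$, hence is an $F$-linear derivation of $k$ and factors through the module of K\"ahler differentials $\Omega_{k/F}$, which must then have rank at most $1$ over $k$).

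So assume $n \geq 1$. Because $P$ has positive degree, its powers $1, P, P^{2}, \dots$ have pairwise distinct degrees $0, n, 2n, \dots$ in $S$ and are therefore linearly independent over $k$; in particular $F[P]$ is a genuine polynomial ring in one variable over $F$, so it is an integral domain, and an element of $F[P] \subseteq S$ vanishes in $S$ precisely when the corresponding polynomial in $F[s]$ is zero. Since $P$ and $Q$ commute we have $Q \in C_S(P)$, and by Theorem~\ref{thm_DiffField} the centralizer $C_S(P)$ is a free $F[P]$-module of some rank $r \leq n$. The $n+1$ elements $1, Q, Q^{2}, \dots, Q^{n}$ of $C_S(P)$ therefore cannot be $F[P]$-independent: viewing $C_S(P)$ inside $C_S(P) \otimes_{F[P]} F(P) \cong F(P)^{r}$ (the map is injective, a free module being torsion free) yields a nontrivial $F(P)$-linear dependence among them, and clearing denominators produces $c_{0}(P), c_{1}(P), \dots, c_{n}(P) \in F[P]$, not all zero, with $\sum_{i=0}^{n} c_{i}(P)\, Q^{i} = 0$.

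Finally I would assemble the annihilating polynomial: writing $c_{i}(P) = \sum_{j} b_{ij} P^{j}$ with $b_{ij} \in F$ and putting $f(s,t) := \sum_{i,j} b_{ij}\, s^{j} t^{i} \in F[s,t]$, one gets $f(P,Q) = \sum_{i} c_{i}(P) Q^{i} = 0$ (unambiguous since $P$ and $Q$ commute), and $f \neq 0$ because some $c_{i_{0}}(P)$ is nonzero in $S$, forcing $c_{i_{0}}(s) \neq 0$ in $F[s]$. I do not expect any obstacle in this write-up itself; granting Theorem~\ref{thm_DiffField}, it is merely the observation that $n+1$ vectors in a free module of rank at most $n$ over a commutative integral domain satisfy a nontrivial linear relation over that domain. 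All the real difficulty sits upstream, in establishing that $C_S(P)$ is a finite-rank free $F[P]$-module in the first place, which is exactly where the hypotheses that $k$ is a field and that $\CHAR(k) = 0$ are used essentially.
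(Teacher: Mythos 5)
Your main argument is exactly the paper's proof of this corollary: apply Theorem~\ref{thm_DiffField} to conclude that $1,Q,\dots,Q^{n}$ are $F[P]$-linearly dependent in the free module $C_S(P)$ of rank at most $n$, and then rearrange the relation $\sum_{i} c_i(P)Q^{i}=0$ into the two-variable polynomial $f(s,t)=\sum_{i,j}b_{ij}s^{j}t^{i}$. You supply two details the paper leaves implicit --- that $n+1$ elements of a free module of rank at most $n$ over a domain satisfy a nontrivial relation (via tensoring with $F(P)$ and clearing denominators), and that $f\neq 0$ because $F[P]\cong F[s]$ once $\deg P\geq 1$ --- and both are correct.

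The one genuine problem is your treatment of the degenerate case $\deg P=\deg Q=0$. The claim that the transcendence degree of $k$ over its field of constants $F$ is at most $1$ is false: for $k=\Q(u,v)$ with $\delta=\partial_u+v\,\partial_v$ one checks (by the usual coprimality and degree argument on numerator and denominator) that the field of constants is exactly $\Q$, yet $k$ has transcendence degree $2$ over $\Q$. Your parenthetical argument only exhibits $\delta$ as one element of $\mathrm{Hom}_k(\Omega_{k/F},k)$; a single derivation with constant field $F$ does not bound the rank of $\Omega_{k/F}$. Worse, the corollary itself fails in this degenerate case --- take $P=u$ and $Q=v$ in the example above: they commute but are algebraically independent over $F=\Q$ --- so no argument can rescue it. The statement, like Amitsur's Theorem~1 (Theorem~\ref{thm_DiffField}), tacitly assumes that at least one of $P,Q$ has positive degree, which is also the case to which the paper's own proof silently restricts. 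Apart from this misstep on a case you did not need to (and cannot) handle, your write-up is correct and coincides with the paper's argument.
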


\begin{proof}
 Let $P$ have degree $n$. Since $Q$ belongs to $C_S(P)$ we know that $1,Q, \ldots , Q^n$ are linearly dependent over $F[P]$ by Theorem \ref{thm_DiffField}. But this tells us that there are elements 
$\phi_0(P), \phi_1(P), \ldots \phi_n(P),$ in $F[P]$, of which not all are zero,
 such that 
\begin{displaymath}
 \phi_0(P)+\phi_1(P)Q+\ldots + \phi_n(P)Q^n = 0.
\end{displaymath}
Setting $f(s,t)=\sum_{i=0}^n \phi_i(s)t^i$ the corollary is proved. 
\end{proof}

\begin{remark}
 Note that $F$, the field of constants, equals the center of $\DiffPol$.
\end{remark}

In \cite{GoodearlPseudo} Goodearl has extended the results of Amitsur to a more general setting. The following theorem is contained in \cite[Theorem 1.2]{GoodearlPseudo}.
\begin{theorem}\label{GoodearlThm}
 Let $R$ be a semiprime commutative ring with derivation $\delta$ and assume that its ring of constants is a field, $F$. If $P$ is an operator in $\DiffPol$ of positive degree $n$, where $n$ is invertible 
in $F$, and has an invertible leading 
coefficient, then $C_S(P)$ is a free $F[P]$-module of rank at most $n$.
\end{theorem}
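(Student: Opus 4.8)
The plan is to follow Amitsur's strategy for Theorem~\ref{thm_DiffField}, pushing it through in the greater generality permitted by the hypotheses. First I would verify that $C_S(P)$ is commutative: if $Q_1, Q_2 \in C_S(P)$, then comparing leading terms and using that the leading coefficient of $P$ is invertible and $n$ is invertible in $F$, one shows by a degree argument (induction on degrees) that any element commuting with $P$ is ``determined'' in a strong enough sense to force $Q_1 Q_2 = Q_2 Q_1$; more concretely, the subring $C_S(P)$ is integral of bounded degree over $F[P]$ and one runs the classical argument that a commutative-enough overring forces commutativity. The semiprimeness of $R$ enters to guarantee there are no nilpotent obstructions when cancelling leading coefficients.

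The core of the proof is the rank bound. I would fix $P$ of degree $n$ with invertible leading coefficient $a_n$, and set up the filtration of $C_S(P)$ by degree: let $V_d = \{ Q \in C_S(P) : \deg(Q) \le d\}$. The key lemma is that for each residue class $r \in \{0, 1, \ldots, n-1\}$, the set of leading coefficients occurring among elements of $C_S(P)$ of degree $\equiv r \imod{n}$, together with $0$, forms an $F$-submodule of $R$ that is actually closed under the relevant operations; and crucially, if two elements of $C_S(P)$ have the same degree $d$ and proportional (over $F$) leading coefficients, their appropriate $F[P]$-combination has strictly smaller degree. Since $P$ itself has invertible leading coefficient, multiplication by $P$ shifts degree by exactly $n$, so from any element of $C_S(P)$ of degree $d = qn + r$ we can subtract an $F$-multiple of $P^q$ times a fixed representative of degree $r$ to drop the degree by at least one — the invertibility of $a_n$ (and of $n$ in $F$, to control the subleading coefficient of powers of $P$) is what makes this reduction actually land inside $C_S(P)$ and actually decrease the degree. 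Iterating, one extracts at most $n$ elements $Q_0, \ldots, Q_{n-1}$ of $C_S(P)$ (with $\deg Q_r \equiv r$) that generate $C_S(P)$ as an $F[P]$-module.

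To upgrade ``generating set of size $\le n$'' to ``free of rank $\le n$'', I would show $C_S(P)$ is torsion-free over $F[P]$: since $F$ is a field, $F[P]$ is a (commutative) polynomial ring, hence a PID, so a finitely generated torsion-free module is free, and its rank is at most the size of any generating set. Torsion-freeness follows because $S$ is a domain when $R$ is a domain — and in the general semiprime case one argues that $F[P]$-torsion in $C_S(P)$ would produce, after multiplying by a suitable polynomial in $P$, a zero element with nonzero leading data, contradicting the degree bookkeeping; alternatively reduce to the domain case by passing to quotients by minimal primes, using that $\delta$ descends and the ring of constants remains controlled. The main obstacle I anticipate is precisely the degree-reduction step in the non-field, merely semiprime setting: one must show that the ``leading coefficient'' of a commutator or of an $F[P]$-combination behaves well even though $R$ may have zero divisors, and that invertibility of $a_n$ and of $n \cdot 1_F$ suffices to carry out every cancellation inside $C_S(P)$ rather than merely inside $S$. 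Handling that cleanly — isolating exactly which coefficients of powers of $P$ need to be inverted, and checking the constants that appear really are constants — is where the real work lies; the module-theoretic wrap-up is then routine.
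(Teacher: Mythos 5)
Your module-theoretic skeleton is essentially the one the paper uses: the paper restates Goodearl's argument as Theorem~\ref{thm_Centralizer}, choosing for each residue class $i \bmod n$ a minimal-degree element $p_i \in C_S(P)$ and showing these span by induction on degree (subtracting $\alpha P^q p_i$ to drop the degree) and are independent because nonzero terms $f_i p_i$ with $f_i \in F[P]$ have degrees in distinct residue classes mod $n$. That last observation makes your detour through torsion-freeness and the PID structure theorem unnecessary, though it would also work; and the opening claim that $C_S(P)$ is commutative is not part of the statement and is not needed anywhere in the argument.

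The genuine gap is the step you defer as ``where the real work lies'': the assertion that any two elements of $C_S(P)$ of the same degree $m$ have leading coefficients that are proportional over $F$. Without this, the inductive degree-reduction $e \mapsto e - \alpha P^q p_i$ cannot be carried out, and the entire spanning argument collapses; this proportionality is precisely the hypothesis that Theorem~\ref{thm_Centralizer} takes as given and that Goodearl's Theorem~1.2 must actually establish from semiprimeness. It is not bookkeeping. Concretely, one compares the coefficient of $x^{n+m-1}$ in $PQ=QP$ (the $x^{n+m}$ terms cancel since $R$ is commutative, as do the cross terms involving $p_{n-1}$ and $q_{m-1}$) to obtain $n\,p_n\delta(q_m) = m\,q_m\delta(p_n)$, where $p_n$ is the leading coefficient of $P$ and $q_m$ that of $Q$. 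Invertibility of $n$ in $F$ and of $p_n$ in $R$ then give $\delta\bigl(q_m^{\,n} p_n^{-m}\bigr)=0$, so $q_m^{\,n} p_n^{-m}$ lies in the field $F$; semiprimeness guarantees $q_m^{\,n}\neq 0$ when $q_m \neq 0$, so this constant is a nonzero element of $F$, and from the resulting relations one extracts that the degree-$m$ leading coefficients form a one-dimensional $F$-subspace of $R$. None of this appears in your proposal beyond the acknowledgement that something of the sort is needed, so as written the proof is incomplete at its central point.
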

 We recall that a commutative ring is semiprime if and only if it has no nonzero nilpotent elements.

Goodearl notes that if $R$ is a semiprime ring of positive characteristic such that the ring of constants is a field, then $R$ must be a field. In this case he proves the following theorem \cite[Theorem 1.11]{GoodearlPseudo}.

\begin{theorem}\label{GoodearlThm2}
 Let $R$ be a field, with a derivation $\delta$, and let $F$ be its subfield of constants. If $P$ is an element of $S=\DiffPol$ of positive degree $n$ and with invertible leading coefficient, then $C_S(P)$ is a free 
$F[P]$-module of rank at most $n^2$. 
\end{theorem}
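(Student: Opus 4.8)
The plan is to follow the architecture of Amitsur's proof of Theorem~\ref{thm_DiffField}, while taking into account the phenomena peculiar to positive characteristic. Write $n=\deg P$ and let $a$ denote the leading coefficient of $P$; recall that here $\sigma=\identity_R$, so leading coefficients multiply without any twist. The easy structural facts come first. Since $R$ is a field, every nonzero element of $S=\DiffPol$ has invertible leading coefficient, so $S$ admits division with remainder on both sides; hence $S$ is a principal left and right ideal domain, in particular a Noetherian Ore domain with a division ring of fractions $\mathcal D$. The element $P$ is transcendental over $F$: a nonzero polynomial $\sum_j b_j P^j$ with $b_j\in F$ has degree $n\cdot\max\{j:b_j\neq 0\}>0$, since $F$ is central in $S$ (because $xc=cx$ for $c\in F$) and $S$ is a domain; thus $F[P]\cong F[t]$ is a principal ideal domain. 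Moreover $C_S(P)$ is a commutative domain: the key tool is that if $U\in C_S(P)$ has degree $m$ and leading coefficient $u$, then comparing the coefficients of $x^{m+n-1}$ on the two sides of $UP=PU$ yields $m\,u\,\delta(a)=n\,a\,\delta(u)$, hence $\delta(u^n a^{-m})=0$, i.e.\ $u^n=c\,a^m$ for some $c\in F$, and from this a standard argument shows $C_S(P)$ is commutative. Finally $C_S(P)$ is torsion-free as an $F[P]$-module, being a submodule of the domain $S$ while $F[P]\setminus\{0\}$ consists of regular elements. A finitely generated torsion-free module over the PID $F[P]$ is free, so it remains to show that $C_S(P)$ is finitely generated over $F[P]$ and has rank at most $n^2$.

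For the rank bound I would run Amitsur's degree-counting argument, but relative to the subfield $k_0:=C_S(P)\cap R$ of $R$ rather than to $F$. One checks that $k_0$ is a field with $F\subseteq k_0\subseteq R$ (conjugate $rP=Pr$ by $r^{-1}$), that $P$ is transcendental over $k_0$, and that $k_0[P]$ is a commutative subring of $C_S(P)$ which, once $[k_0:F]$ is known to be finite, is $F[P]$-free of rank $[k_0:F]$. For each degree $d$ realized by a nonzero element of $C_S(P)$, the set $V_d\cup\{0\}$ of leading coefficients of degree-$d$ elements of $C_S(P)$ is a $k_0$-subspace of $R$, and multiplying by powers of $P$ gives $a^{\,(d'-d)/n}V_d\subseteq V_{d'}$ whenever $d'\geq d$ and $d'\equiv d$ modulo $n$. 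The crucial claim is that $\dim_{k_0}V_d\leq 1$ for every such $d$. Granting it, the usual peeling-off argument — choose, for each residue class mod $n$ that occurs, one element of $C_S(P)$ of least degree in that class (at most $n$ elements), and iteratively subtract off leading terms — shows that $C_S(P)$ is generated over $k_0[P]$ by at most $n$ elements, hence is $k_0[P]$-free of rank at most $n$. Therefore $C_S(P)$ is $F[P]$-free of rank at most $n\cdot[k_0:F]$, and everything comes down to proving $[k_0:F]\leq n$.

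For the last estimate, one shows that $\delta$ induces on the field $k_0$ a nilpotent derivation of index at most $n$ whose kernel is exactly $F$ (this is the step that genuinely uses $\deg P=n$). Then $[k_0:F]\leq n$ is immediate from the elementary fact that for a nilpotent derivation $d$ of a field one has $\dim_{\ker d}(\ker d^{\,i})\leq i$ for all $i$: apply rank--nullity to $d\colon\ker d^{\,i}\to\ker d^{\,i-1}$, whose kernel is $\ker d$ and whose image lies in $\ker d^{\,i-1}$, and induct. Combining, $C_S(P)$ is $F[P]$-free of rank at most $n\cdot n=n^2$. (When $n<\CHAR(R)$, or in characteristic $0$, one finds that $k_0=F$, recovering the sharper bound $n$ of Theorem~\ref{thm_DiffField}.)

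The main obstacle is establishing the two claims just used: $\dim_{k_0}V_d\leq 1$ and ``$\delta$ acts on $k_0$ as a nilpotent derivation of index $\leq n$ with kernel $F$''. These amount to a careful analysis of the leading (and lower-order) coefficients of elements of $C_S(P)$ in positive characteristic, and this is where the argument really differs from Amitsur's. In characteristic zero the relation $u^n=c\,a^m$ with $c\in F$ already confines $u$ to a single $F$-line — an $n$-th root of a constant lying in $R$ is a constant multiple of any fixed such root — but in characteristic $p$ that relation is vacuous, since $R^p\subseteq F$ forces every $n$-th power of an element of $R$ into $F$. One must therefore extract the needed information from the lower-order coefficient equations of $UP=PU$, and this is most cleanly organized after first normalizing $P$ — for instance, passing to a pseudodifferential completion of $S$ (Goodearl's method) and using the invertibility of the leading coefficient to put $P$ into a convenient form there. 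Carrying out this normalization and the ensuing coefficient bookkeeping is the technical heart of the proof; once it is in place, the principal-ideal-domain formalism of the preceding paragraphs completes the argument mechanically.
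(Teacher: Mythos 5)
The paper itself does not prove this theorem; it quotes it from \cite{GoodearlPseudo} (Theorem 1.11), so your argument has to stand on its own. Its overall architecture --- introduce the intermediate field $k_0=C_S(P)\cap R$, show $C_S(P)$ is free of rank at most $n$ over $k_0[P]$ by the residue-class-of-degrees argument, and separately bound $[k_0:F]\leq n$ --- has the right shape and is essentially Goodearl's strategy. But one assertion along the way is false: $C_S(P)$ is \emph{not} in general commutative in positive characteristic, and your route to it collapses exactly there. Take $R=k(t)$ with $\CHAR(k)=p$, $\delta=d/dt$, and $P=x^p$. Since $\delta^p=0$ on $R$ and $x^pr=\delta^p(r)+rx^p$, the element $P$ is central, so $C_S(P)=S$ is noncommutative; your relation $mu\delta(a)=na\delta(u)$ is vacuous here because $\delta(a)=0$ and $n=p=0$ in $R$. (The same example shows the bound $n^2$ is sharp: $F=k(t^p)$ and $S$ is free of rank $p^2$ over $F[x^p]$.) Commutativity is not actually needed for the freeness conclusion, so this error is not fatal to the plan, but the claim must be dropped.

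The genuine gap is that the two claims carrying the entire theorem --- that $\dim_{k_0}V_d\leq 1$ for every realized degree $d$, and that $\delta$ stabilizes $k_0$ and acts on it as a nilpotent derivation of index at most $n$ with kernel exactly $F$ (whence $[k_0:F]\leq n$) --- are asserted but not proved; you explicitly defer them as ``the technical heart of the proof.'' Neither is routine. As you yourself note, the characteristic-zero mechanism (the relation $u^n=ca^m$ with $c\in F$ pinning $u$ to an $F$-line) is vacuous in characteristic $p$ because $R^p\subseteq F$, so both claims must be extracted from the lower-order coefficient equations of $UP=PU$ after a pseudodifferential normalization of $P$ --- precisely the part of the argument you name but do not carry out. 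Even the preliminary facts you lean on, such as $k_0$ being closed under $\delta$, are left unjustified. Until those claims are established, what you have is a correct road map to Goodearl's theorem rather than a proof of it.
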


 As before we get the following corollary (of both Theorem \ref{GoodearlThm} and Theorem \ref{GoodearlThm2}), which is found in  \cite[Theorem 1.13]{GoodearlPseudo}.
\begin{corollary}
 Let $P$ and $Q$ be commuting elements of $\DiffPol$, where $R$ is a semiprime commutative ring, with a derivation $\delta$ such that the subring of constants is a field.  Suppose that the leading coefficient of $P$ is invertible. 
Then there 
exists a non-zero polynomial 
$f(s,t) \in F[s,t]$ such that $f(P,Q)=0$.
\end{corollary}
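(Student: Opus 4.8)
The plan is to follow the template of Corollary \ref{cor_Amitsur}: locate a finite-rank module over a polynomial ring that must contain an annihilating relation, and then do linear algebra. The only wrinkle is that, according to the characteristic of $R$, a different one of Goodearl's two structure theorems has to be invoked. Throughout put $S=\DiffPol$, write $F$ for the field of constants, and let $n=\deg(P)$; we may assume $n\geq 1$ (this is implicit in the cited theorems, which speak of operators of positive degree).

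First I would split on the characteristic of $R$. If $\CHAR(R)=0$, then $F$ is a field of characteristic zero, so the positive integer $n$ is invertible in $F$; since moreover $R$ is semiprime commutative with ring of constants the field $F$, and $P$ has positive degree $n$ and invertible leading coefficient, Theorem \ref{GoodearlThm} applies and tells us that $C_S(P)$ is a free $F[P]$-module of rank at most $n$. If instead $\CHAR(R)=p>0$, I would invoke Goodearl's observation (recalled just before Theorem \ref{GoodearlThm2}) that a semiprime ring of positive characteristic whose ring of constants is a field must itself be a field; thus $R$ is a field, Theorem \ref{GoodearlThm2} applies to $P$, and $C_S(P)$ is a free $F[P]$-module of rank at most $n^2$. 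Either way, $C_S(P)$ is a free $F[P]$-module of some finite rank $N$.

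Next comes the linear-algebra step, carried out exactly as in the proof of Corollary \ref{cor_Amitsur}. One first notes that $F[P]$ is a commutative integral domain: indeed the powers $P^k$ have the strictly increasing degrees $kn$ with invertible (hence nonzero) leading coefficients, so $1,P,P^2,\dots$ are left linearly independent over $R$, in particular linearly independent over $F$, whence $P$ is transcendental over $F$ and $F[P]\cong F[s]$. Since $P$ and $Q$ commute, every power $Q^i$ lies in $C_S(P)$, so $1,Q,\dots,Q^N$ are $N+1$ elements of a free module of rank $N$ over the domain $F[P]$, hence are $F[P]$-linearly dependent. This produces $\phi_0(P),\dots,\phi_N(P)\in F[P]$, not all zero, with
\[ \phi_0(P)+\phi_1(P)\,Q+\dots+\phi_N(P)\,Q^N=0. \]
Setting $f(s,t)=\sum_{i=0}^N\phi_i(s)\,t^i\in F[s,t]$, which is nonzero since some $\phi_i$ is nonzero, we obtain $f(P,Q)=0$, this expression being well-defined precisely because $P$ and $Q$ commute.

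I do not expect a serious obstacle: the whole argument is bookkeeping on top of Goodearl's theorems. The one point that needs care is recognizing that Theorem \ref{GoodearlThm} does not by itself suffice in positive characteristic (there $n$ need not be invertible in $F$, and $R$ need not meet that theorem's hypotheses unless it happens to be a field), so the positive-characteristic case must be routed through Theorem \ref{GoodearlThm2} instead, which is exactly why Goodearl's characteristic-$p$ observation is needed. Everything else is identical to the passage from a finite-rank centralizer to an annihilating polynomial already performed in Corollary \ref{cor_Amitsur}.
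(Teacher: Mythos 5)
Your proposal is correct and follows essentially the same route as the paper, which derives this corollary ``as before'' from Theorems \ref{GoodearlThm} and \ref{GoodearlThm2} by repeating the linear-algebra argument of Corollary \ref{cor_Amitsur}. Your explicit case split on the characteristic (using Goodearl's observation that a semiprime ring of positive characteristic with field of constants is itself a field) is exactly the reason the paper cites both theorems, so nothing is missing.
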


Note that Amitsur's work does not quite generalize Burchnall's and Chaundy's results since $C^\infty(\R,\C)$ is not a field. Theorem~\ref{GoodearlThm} does however imply their results since $C^\infty(\R,\C)$ is 
certainly commutative, 
does not have any nonzero nilpotent elements and its ring of constants is a field (isomorphic to $\C$). The only point to notice is that Theorem \ref{GoodearlThm} requires $P$ to have positive degree. If $P$ is an element of degree
zero and with constant leading coefficient however, it is itself a constant. Then $f(s,t) = s-P$ will be an annihilating polynomial for $P$ and any $Q$. 

An earlier paper by Carlson and Goodearl, \cite{GoodearlCarlson}, contains results similar to Theorems \ref{GoodearlThm} and \ref{GoodearlThm2}, in a different setting. Part of their Theorem 1 can be formulated as follows.  
\begin{theorem}
 Let $R$ be a commutative ring, with a derivation $\delta$ such that the ring of constants is a field, $F$, of characteristic zero. Assume that, for all $a \in R$, 
if the set $\{ r \in R \ | \ \delta(r) = ar \ \}$ contains a nonzero element, then it contains an invertible element. Let $P$ be an element of $\DiffPol$ of positive degree $n$ with invertible leading coefficient. Then $C_S(P)$ 
is a free $F[P]$-module of rank at most $n$. As before, this implies that if $Q$ commutes with $P$, there exists a nonzero polynomial $f(s,t) \in F[s,t]$ such that $f(P,Q)=0$.  
\end{theorem}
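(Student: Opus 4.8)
The plan is to follow Amitsur's argument for Theorem~\ref{thm_DiffField}, with the hypothesis on $R$ playing the role that the field structure of $k$ plays there. Write $S=\DiffPol$ and $P=\sum_{i=0}^n a_ix^i$ with $a_n$ invertible and $n>0$; since $F$ has characteristic zero, every nonzero integer is invertible in $F$, hence in $R$. Once we know that $C_S(P)$ is a free $F[P]$-module of rank at most $n$, the statement about the annihilating polynomial follows exactly as in Corollary~\ref{cor_Amitsur}: the $n+1$ elements $1,Q,\dots,Q^n$ of a module of rank $\le n$ over $F[P]$ satisfy a relation $\sum_i\phi_i(P)Q^i=0$ with the $\phi_i\in F[s]$ not all zero, and then $f(s,t)=\sum_i\phi_i(s)t^i$ is a nonzero polynomial in $F[s,t]$ with $f(P,Q)=0$, the evaluation being unambiguous because $P$ and $Q$ commute. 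So the work is entirely in the rank bound.

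First I would record the standard computation with leading coefficients. For a nonzero $Q=\sum_{j=0}^m b_jx^j\in C_S(P)$, comparing the coefficients of $x^{n+m}$ and of $x^{n+m-1}$ on the two sides of $PQ=QP$ shows, using commutativity of $R$, that the top one cancels automatically and the next yields $na_n\delta(b_m)=m\,\delta(a_n)\,b_m$; as $na_n$ is invertible this reads $\delta(b_m)=a_mb_m$, where $a_m:=\tfrac mn a_n^{-1}\delta(a_n)$ depends only on $m$ and $P$. The same bookkeeping applied to $[P,r]$ for $r\in R$ gives that its coefficient of $x^{n-1}$ is $na_n\delta(r)$, so $C_S(P)\cap R$ equals the ring of constants $F$. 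Hence the leading coefficient of every degree-$m$ element of $C_S(P)$ lies in $V_{a_m}$, where $V_a:=\{r\in R:\delta(r)=ar\}$. Now comes the key point: $V_a$ is an $F$-submodule of $R$ (because $\delta(cr)=c\delta(r)$ for $c\in F$), and by hypothesis, if $V_a\neq 0$ then it contains an invertible element $u$; applying the quotient rule, $\delta(ru^{-1})=aru^{-1}-ru^{-1}(au)u^{-1}=0$ for every $r\in V_a$, so $ru^{-1}\in F$ and $V_a=Fu$. Thus each $V_a$ is either $0$ or a one-dimensional $F$-line spanned by a unit; in particular every nonzero leading coefficient appearing in $C_S(P)$ is invertible, and consequently leading coefficients of products and of powers never vanish.

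The remainder is a leading-term reduction. Since $a_n$ is a unit, $P$ is transcendental over $F$, so $F[P]\cong F[s]$. For each residue class modulo $n$ that occurs as the degree of some element of $C_S(P)$, fix an element $Q_i$ of least such degree $m_i$, taking $Q_0=1$ for the class of $0$. Given $Q\in C_S(P)$ of degree $m$, write $m=m_i+kn$; then $Q$ and $P^kQ_i$ have the same degree $m$, their leading coefficients both lie in the one-dimensional space $V_{a_m}$, and that of $P^kQ_i$ is a unit, so there is $\lambda\in F$ with $\deg(Q-\lambda P^kQ_i)<m$. Iterating this (with $C_S(P)\cap R=F$ covering the base case $\deg Q=0$) expresses $Q$ as an $F[P]$-linear combination of the finitely many $Q_i$. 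These are $F[P]$-independent: in a relation $\sum_i\phi_i(P)Q_i=0$ the nonzero summands have pairwise distinct degrees $n\deg\phi_i+m_i$ and unit leading coefficients, so the highest-degree one cannot be cancelled. Therefore $C_S(P)=\bigoplus_i F[P]Q_i$ is free over $F[P]$ of rank at most $n$, which completes the proof.

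The step I expect to be the crux is showing that the spaces $V_a$ are at most one-dimensional over $F$: this is the one place where the hypothesis on $R$ is used, and it is precisely the substitute for the elementary fact that, when $R$ is a field, the solution set of $\delta(r)=ar$ is at most one-dimensional over the constants. The attendant subtlety is the small but indispensable observation that a nonzero element of such a $V_a$ is automatically invertible, since that is what keeps the degree bookkeeping --- products, powers, and the cancellation step --- from breaking down.
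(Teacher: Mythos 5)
Your argument is correct, and it is worth noting that the paper itself offers no proof of this statement: it is quoted from Carlson and Goodearl \cite{GoodearlCarlson}, so there is no in-text proof to compare against line by line. What you have written is essentially the Amitsur--Goodearl leading-coefficient strategy that the paper does spell out in its proof of Theorem~\ref{thm_Centralizer}: pick minimal-degree representatives of the residue classes of degrees modulo $n$, reduce by subtracting $F$-multiples of $P^kQ_i$, and get independence from the distinctness of degrees modulo $n$. The two genuinely new ingredients you supply, both of which are needed and both of which you handle correctly, are (a) the identification of the leading coefficient of a degree-$m$ centralizing element as an element of the eigenspace $V_{a_m}$ via the $x^{n+m-1}$-coefficient computation $na_n\delta(b_m)=m\,\delta(a_n)b_m$, together with the observation that the hypothesis on $R$ plus the quotient rule forces each nonzero $V_a$ to be a one-dimensional $F$-line spanned by a unit; and (b) the replacement of the integral-domain hypothesis of Theorem~\ref{thm_Centralizer} (which the paper uses to make degrees additive) by the fact that all relevant leading coefficients are units, so that degrees of the products $P^kQ_i$ and $\phi_i(P)Q_i$ still behave as expected over a ring that is merely commutative. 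This is exactly the role the Carlson--Goodearl hypothesis is designed to play, and your closing derivation of the annihilating polynomial from the rank bound is the same as the paper's Corollary~\ref{cor_Amitsur}. The only cosmetic complaint is the clash of notation between $a_m$ (your eigenvalue $\tfrac{m}{n}a_n^{-1}\delta(a_n)$) and the coefficients $a_i$ of $P$; rename one of them.
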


Note that the ring $R$ in Example \ref{C-infty} satisfies the conditions of the theorem. 

\section{Burchnall-Chaundy theory for Ore extensions}

Let $k$ be a field and $q$ a nonzero element of that field, not a root of unity. Set $R = k[y]$, a polynomial ring in one variable over $k$. There is an endomorphism $\sigma$ of $R$ such that $\sigma(y)=qy$ 
and the restriction of $\sigma$ to $k$ is the identity. For this $\sigma$ there exists a unique $\sigma$-derivation $\delta$ such that $\delta(y)=1$ and $\delta(\alpha)=0$ for all $\alpha \in k$. The Ore extension 
$\OreGen$ for this choice of $R, \sigma$ and $\delta$ is known as the (first) $q$-Weyl algebra. (An alternative name is the $q$-Heisenberg algebra.)

Silvestrov and collaborators \cite{Silvestrov, HSbook, LarssonSilv} have extended the result of Burchnall and Chaundy to the $q$-Weyl algebra. The cited references contain two different proofs for the fact that any pair of
commuting elements of $\OreGen$ are algebraically dependent over $k$. In \cite{Silvestrov} an algorithm is given to compute an annihilating polynomial explicitly. 

The algorithm is a variation of one presented by Burchnall and Chaundy in their original papers and consists of forming a certain determinant that when evaluated gives the annihilating polynomial. 

Mazorchuk \cite{Mazorchuk} has presented an alternative approach to showing the algebraic dependence of commuting elements in $q$-Weyl algebras. He proves the following theorem. 

\begin{theorem}\label{thm_Maz}
Let $k$ be a field and $q$ an element of $k$. Set $R=k[y]$ and suppose that $\sum_{i=0}^N q^i \neq 0$ for any natural number $N$. Let $P$ be an element of $S=\OreGen$ of degree at least $1$. Then $C_S(P)$ is a 
free $k[P]$-module of finite rank. 
\end{theorem}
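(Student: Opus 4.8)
The plan is to reduce the theorem to the assertion that $C_S(P)$ is a \emph{finitely generated} $k[P]$-module. This suffices: since $P$ has positive degree in $x$ its powers have pairwise distinct degrees, so $1,P,P^{2},\dots$ are $k$-linearly independent and $k[P]\cong k[t]$ is an honest polynomial ring; and (we may assume $q\neq 0$, so that $S=\OreGen$ with $R=k[y]$ is a domain) $C_S(P)$ is torsion-free over $k[P]$. A finitely generated torsion-free module over the principal ideal domain $k[P]$ is automatically free of finite rank, so finite generation is all that is needed.

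The engine of the proof is the $\Z$-grading of $S$ with $\deg x=1$ and $\deg y=-1$; this is a grading because the relation $xy=qyx+1$ is homogeneous of degree $0$. Write $S=\bigoplus_{n}S_n$. Then $S_0=k[u]$ with $u=xy$ is a polynomial ring, each $S_n$ is free of rank one over $S_0$, and $xu=(qu+1)x$ gives the commutation law $x\,f(u)=f(\tau(u))\,x$ with $\tau(u)=qu+1$. The hypothesis ``$\sum_{i=0}^{N}q^{i}\neq 0$ for all $N$'' is exactly the statement that $\tau^{m}$ fixes no non-constant polynomial in $k[u]$ for any $m\neq 0$ (equivalently $q$ is not a root of unity, and $\CHAR(k)=0$ when $q=1$), so $\tau$ behaves ``generically''. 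I would also use the action of $S$ on $k[y]$ in which $y$ acts by multiplication and $x$ acts by the $q$-difference operator $y^{j}\mapsto\big(\textstyle\sum_{i=0}^{j-1}q^{i}\big)y^{j-1}$, whose coefficients are nonzero by hypothesis.

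First I would settle the \emph{homogeneous case}: if $z\in S$ is homogeneous, nonzero and not a scalar, then $C_S(z)$ is free over $k[z]$ of finite rank. Since $z$ is homogeneous, $C_S(z)=\bigoplus_c\big(C_S(z)\cap S_c\big)$ is graded. The condition for two homogeneous elements $r(u)x^{c}$ and $p(u)x^{d}$ to commute is the functional equation $p(u)\,r(\tau^{d}(u))=r(u)\,p(\tau^{c}(u))$ in $k[u]$; the ratio of two solutions is a $\tau$-invariant rational function, hence a constant, so each graded piece $C_S(z)\cap S_c$ is at most one-dimensional. When $\deg z>0$, $C_S(z)$ is bounded below: a putative centralizing element of negative degree has finite support, so its action on $k[y]$ is given by ``$q$-polynomial'' coefficients, but commuting with $z$ forces these to obey a first-order recursion whose solution is a ratio of $q$-factorials --- which can only be a $q$-polynomial when it vanishes, so the element is $0$. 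Since $z^{j}\in C_S(z)\cap S_{j\deg z}$ and the degrees in which $C_S(z)$ is nonzero form a subsemigroup of $\Z_{\ge 0}$, once a graded piece in some residue class mod $\deg z$ appears, multiplication by $z$ carries it onto the next one; splitting according to residues then realises $C_S(z)$ as a free $k[z]$-module of rank $\le|\deg z|$. The case $\deg z<0$ is symmetric, and when $\deg z=0$ the same functional equations give $C_S(z)=k[u]$.

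For a general $P$ of positive $x$-degree, let $P_{+}$ denote the homogeneous component of $P$ of largest degree; it is not a scalar if that degree is positive, while if $P$ has a component of negative degree one works symmetrically with the bottom component, and if $P\in S_0$ it is a non-scalar homogeneous element and is already covered. The ``leading term'' map $\lambda(Q)=$ (top homogeneous component of $Q$) is multiplicative on nonzero elements of the graded domain $S$, so $\lambda\big(C_S(P)\big)\subseteq C_S(P_{+})$, and since $\lambda(P^{j})=P_{+}^{\,j}$ the set $M:=\lambda\big(C_S(P)\big)$ is a graded $k[P_{+}]$-submodule of $C_S(P_{+})$; by the homogeneous case and Noetherianity of $k[P_{+}]\cong k[t]$ it is finitely generated, say by homogeneous elements $m_i=\lambda(Q_i)$ with $Q_i\in C_S(P)$. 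Given $Q\in C_S(P)$ of top degree $c$, one matches $\lambda(Q)$ by the forced $k$-combination $\sum_i c_i\,P_{+}^{(c-\deg m_i)/\deg P_{+}}m_i$, and then $Q-\sum_i c_i\,P^{(c-\deg m_i)/\deg P_{+}}Q_i\in C_S(P)$ has strictly smaller top degree. Since $C_S(P_{+})$ is bounded below, iterating this terminates and yields $C_S(P)=\sum_i k[P]\,Q_i$. The step I expect to be the real obstacle is the homogeneous case, and within it the control of the negative-degree part of $C_S(z)$: ruling out ``unexpected'' centralizing elements of negative degree is where the full force of the hypothesis $\sum_{i=0}^{N}q^{i}\neq 0$ enters, through the $q$-difference/recursion estimate. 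The descent in the last step is mostly bookkeeping, but it is what converts the finiteness statements about homogeneous centralizers into genuine finite generation of $C_S(P)$ over $k[P]$.
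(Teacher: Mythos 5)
First, a point of reference: the paper does not actually prove Theorem~\ref{thm_Maz} --- it is quoted from Mazorchuk's note with a citation --- so there is no in-paper proof to compare you against. Your strategy (reduce to finite generation over the PID $k[P]\cong k[t]$; use the $\Z$-grading with $\deg x=1$, $\deg y=-1$, under which $S_0=k[u]$ with $u=xy$ and $xf(u)=f(\tau(u))x$ for $\tau(u)=qu+1$; analyse homogeneous centralizers via functional equations in $k[u]$ and the fact that the hypothesis $\sum_{i=0}^Nq^i\neq 0$ is exactly the statement that $\tau$ has infinite order; then lift to general $P$ through top homogeneous components) is sound and is essentially the graded/dynamical approach of Mazorchuk and of the Hellstr\"om--Silvestrov ``ergodipotent'' paper that the text cites as its generalization. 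It also sits close to the paper's own Theorem~\ref{thm_Centralizer}: for $q$ not a root of unity one can verify the leading-coefficient hypothesis there directly (the ratio $b_m(q^ny)/b_m(y)$ is determined by $a$, and a rational function invariant under $y\mapsto q^ny$ is a scalar), although that route misses the case $q=1$, $\CHAR(k)=0$, which your grading treats uniformly.

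The gap is precisely at the step you flag as the obstacle: the claim that for homogeneous non-scalar $z$ of positive degree, $C_S(z)$ contains no nonzero element of negative degree. Your justification via the action on $k[y]$ and a ``ratio of $q$-factorials'' recursion is not checkable as written (the degree count in the relevant functional equation balances, so nothing is ruled out for free, and one would have to show that the resulting ratio of $q$-factorial products is never the restriction of a polynomial to the points $[m]_q$). The claim is true, and there is a short algebraic argument you should substitute. First, $C_S(f(u))=S_0$ for non-constant $f\in k[u]$, because $r(u)w_c$ commutes with $f(u)$ iff $r(u)\bigl(f(u)-f(\tau^c(u))\bigr)=0$, and $f\circ\tau^c=f$ with $f$ non-constant would force $\tau^c$ to have finite order (the cyclic group generated by $\tau^c$ embeds into $\mathrm{Aut}(k(u)/k(f))$, which is finite). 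Hence $C_S(z)\cap S_0=k$ when $\deg z=d>0$, since a non-constant $f(u)\in C_S(z)$ would put $z\in C_S(f(u))=S_0$. Now if $w\in C_S(z)$ is homogeneous of degree $-e<0$, then $z^ew^d\in C_S(z)\cap S_0=k$; since $S$ is a domain this scalar is nonzero whenever $w\neq 0$, which makes $z$ a unit of $S$, hence a scalar --- contradiction. So $w=0$, and your residue-class bookkeeping then gives the homogeneous case and the final descent. Two caveats to record: (i) the hypothesis as stated admits $q=0$, where $\sigma$ is not injective and $S$ is not a domain ($(yx-1)y=0$), so ``we may assume $q\neq 0$'' needs either a justification or a separate treatment of that degenerate case; (ii) when the two homogeneous elements being compared have degrees of opposite sign, the functional equation acquires the structure constants from $w_cw_d=\mu_{c,d}(u)w_{c+d}$ with $\mu_{c,d}\neq\mu_{d,c}$ --- this does not harm your one-dimensionality argument, since the $\mu$'s cancel in the ratio of two solutions of the same degree, but the equation should be written correctly.
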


If $P$ is as in the theorem and $Q$ is any element of $\OreGen$ that commutes with $P$, then there is an annihilating polynomial $f(s,t)$ with coefficients 
in $k$. This is proven in the same way as Corollary \ref{cor_Amitsur}. The methods used to obtain Theorem \ref{thm_Maz} have been generalized by Hellstr\"{o}m and Silvestrov in \cite{ergodipotent}. 

In \cite[Theorem 3]{RichterSilvestrov} Silvestrov and the present author extend the algorithmic method of \cite{Silvestrov} to more general Ore extensions. 

\begin{theorem}
 Let $R$ be an integral domain with an injective endomorphism $\sigma$ and a $\sigma$-derivation $\delta$. Let $a,b$ be two commuting elements of $\OreGen$. Then there exists a nonzero polynomial  $f(s,t) \in R[s,t]$ such that
$f(a,b)=0$. 
\end{theorem}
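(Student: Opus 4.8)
The plan is to reduce to the case of a coefficient field and then finish with a crude dimension count. The point is that the two hypotheses — $R$ a domain and $\sigma$ injective — are exactly what makes the degree function on $\OreGen$ multiplicative, and this multiplicativity is essentially the only structure that survives when $\sigma$ is not assumed surjective.

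First I would dispose of the degenerate cases: if $a\in R$ then the polynomial $s-a$ annihilates the pair $(a,b)$, and likewise if $b\in R$, so I may assume $n:=\deg(a)\ge 1$ and $m:=\deg(b)\ge 1$. Next I would pass to the field of fractions $K$ of $R$. Since $\sigma$ is injective it extends to an injective endomorphism $\bar\sigma$ of $K$ by $\bar\sigma(r t^{-1})=\sigma(r)\sigma(t)^{-1}$, and $\delta$ extends, by the forced quotient-rule formula $\bar\delta(r t^{-1})=\sigma(t)^{-1}\big(\delta(r)-\delta(t)r t^{-1}\big)$, to a $\bar\sigma$-derivation $\bar\delta$ of $K$; that these maps are well defined and that $\bar\delta$ is indeed a $\bar\sigma$-derivation is standard for Ore extensions (cf.\ \cite{CGoodearl2004}). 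The multiplication rule \eqref{CMultiplicationRule} is preserved, so $\OreGen$ embeds as a subring of $S_K:=K[x;\bar\sigma,\bar\delta]$ with degrees unchanged, and any nonzero annihilating polynomial over $K$ becomes one over $R$ after multiplying through by a common denominator of its coefficients. Hence it suffices to show that $a$ and $b$ are algebraically dependent over $K$.

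The key observation is that in $S_K$ the degree is additive on products: if $f,g$ are nonzero with leading coefficients $\alpha,\beta$, then the coefficient of $x^{\deg f+\deg g}$ in $fg$ is $\alpha\,\bar\sigma^{\deg f}(\beta)$, which is nonzero since $K$ is a field and $\bar\sigma$ is injective. In particular $\deg(a^ib^j)=ni+mj$ for all $i,j\ge 0$. Now suppose, for contradiction, that $a$ and $b$ are algebraically independent over $K$; then no nontrivial $K$-linear combination of $\{a^ib^j:i,j\ge 0\}$ vanishes, i.e.\ this family is $K$-linearly independent in $S_K$. For any $N\ge 1$, every $a^ib^j$ with $ni+mj\le N$ lies in the $(N+1)$-dimensional space $\bigoplus_{\ell=0}^N Kx^\ell$, so there are at most $N+1$ such pairs $(i,j)$; but the pairs with $0\le i\le\lfloor N/(2n)\rfloor$ and $0\le j\le\lfloor N/(2m)\rfloor$ already satisfy $ni+mj\le N$ and number more than $N^2/(4nm)$, which exceeds $N+1$ once $N$ is large. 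This contradiction gives the claim.

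I expect the only genuine work to be the routine verification in the reduction step that $\delta$ extends to a $\bar\sigma$-derivation of $K$ and that the embedding $\OreGen\hookrightarrow S_K$ behaves as stated; the degree-counting part is then immediate. I would stress that this differs from the earlier results quoted above precisely because $\sigma$ need not be surjective: there is then no left division by $a$ in $S_K$ and no free $K[a]$-module structure, so the module-theoretic arguments of Amitsur or Mazorchuk are not available, whereas the dimension count used here is insensitive to surjectivity of $\sigma$. (An alternative in the spirit of the algorithmic method of \cite{RichterSilvestrov} would be to produce the annihilating polynomial explicitly as a resultant-type determinant formed from $a$ and $b$, but the non-constructive count above is considerably shorter.)
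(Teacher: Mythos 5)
Your argument is correct (read with $R$ commutative, which is the only reading consistent with the paper's standing convention that algebraic dependence is defined over a commutative coefficient ring), but it is a genuinely different proof from the one the paper points to. The paper imports this theorem from \cite{RichterSilvestrov}, where the \emph{algorithmic} method of \cite{Silvestrov} is extended: the annihilating polynomial is produced explicitly as a resultant-type determinant in auxiliary commuting variables $s,t$ built from the coefficients of $a$ and $b$, and the work consists in showing that this determinant is a nonzero polynomial which vanishes upon substituting $(a,b)$. You replace this by a non-constructive pigeonhole count: degree additivity --- the one place where ``domain'' and ``$\sigma$ injective'' enter in either approach --- forces $\deg(a^ib^j)=ni+mj$, so quadratically many monomials must become linearly dependent inside the linearly growing degree filtration, and the resulting relation over the quotient field $K$ is cleared back to $R$. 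The determinant method buys an explicit annihilating polynomial (hence the ``algebraic curve'' one actually wants in applications) with controlled bidegree; your count buys brevity, and as you note it even works without left division by $a$, so it is insensitive to non-surjectivity of $\sigma$. Two small points to tighten. First, you do not need to extend $\sigma$ and $\delta$ to $K$ at all, so the one step you flag as ``genuine work'' can be deleted: the products $a^ib^j$ already lie in the free left $R$-module $\bigoplus_{\ell=0}^{N}Rx^\ell$ computed inside $\OreGen$, and you only need the ambient left $K$-vector space $\bigoplus_{\ell=0}^{N}Kx^\ell$ (no ring structure on it) to run the dimension count; this also spares you the quotient-rule formula for $\bar\delta$, which is fiddly to state correctly when $\sigma\neq\identity$. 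Second, state the commutativity of $R$ explicitly: for a noncommutative domain the quotient field need not exist (Mal'cev), and your reduction would then require an Ore condition, whereas everything else in your argument, including the evaluation convention $f(a,b)=\sum r_{ij}a^ib^j$ with coefficients on the left, is insensitive to it.
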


Note that if we apply this theorem to the $q$-Weyl algebra with $R=k[y]$ we get a weaker result than the one stated above. We would like to be able to conclude that if $a,b$ are commuting elements of $k[y][x;\sigma,\delta]$ then 
there is a polynomial $f(s,t)$ in $k[s,t]$ such that $f(a,b)=0$. 

Under certain assumptions on $\sigma$ we have been able to prove this and we now proceed to describe how. We begin with a general theorem that we use as a lemma. 

\begin{theorem}\label{thm_Centralizer}
 Let $R$ be an integral domain, $\sigma$ an injective endomorphism of $R$ and $\delta$ a $\sigma$-derivation on $R$. Suppose that the ring of constants, $F$, is a field. Let $a$ be an element of $S=\OreGen$ of degree $n$
and assume that if $b$ and $c$ are two elements in $C_S(a)$ such that $\deg(b)=\deg(c)=m$, then $b_m=\alpha c_m$, where $b_m$ and $c_m$ are the leading coefficients of $b$ and $c$ respectively, and $\alpha$ is some constant.

Then $C_S(a)$ is a free $F[a]$-module of rank at most $n$.     
\end{theorem}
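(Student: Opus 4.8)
The plan is to mimic the Amitsur--Goodearl strategy: show that inside the centralizer $C_S(a)$ one can perform a kind of division algorithm with respect to powers of $a$, so that $C_S(a)$ becomes a finitely generated $F[a]$-module, and then upgrade ``finitely generated'' to ``free of rank at most $n$'' using that $F[a]$ is a principal ideal domain. First I would record the basic degree facts about $S=\OreGen$: since $R$ is a domain and $\sigma$ is injective, $\deg(PQ)=\deg(P)+\deg(Q)$ for nonzero $P,Q$, and consequently $\deg(a^k)=kn$ for all $k\ge 0$ (here $n=\deg a\ge 1$, for if $n=0$ then $a\in F$ and the statement is trivial with $f(s,t)=s-a$). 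In particular $C_S(a)$ is an integral domain, and it is commutative: if $b,c$ both centralize $a$ and had $[b,c]\ne 0$ one derives a contradiction with the hypothesis on leading coefficients by comparing the leading term of $bc$ and $cb$ — this is the standard argument that a centralizer of an element of positive degree in such an Ore extension is commutative.

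Next comes the key reduction step. Given a nonzero $b\in C_S(a)$ with $\deg b=m$, I want to subtract off an $F[a]$-multiple of $a^j$ to strictly lower the degree, staying inside $C_S(a)$. Write $m = qn + s$ with $0\le s<n$ by the division algorithm in $\Z$. The element $a^q$ lies in $C_S(a)$ and has degree $qn$. Now I claim there is a ``canonical'' element of $C_S(a)$ of each degree that occurs, and more precisely: the set of degrees of elements of $C_S(a)$ is a numerical-semigroup-like set, and for each achievable degree the leading coefficients of all elements of that degree are $F$-proportional by the hypothesis. Here the leading coefficient of $a^q$ is $\sigma^{\text{(appropriate)}}$-iterate applied to $a_n^q$ type expression; the point is that if $b$ has degree $qn$ then, by the leading-coefficient hypothesis, $b_m = \alpha\,(\text{leading coeff of }a^q)$ for some constant $\alpha\in F$, so $b - \alpha a^q \in C_S(a)$ has strictly smaller degree. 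Iterating, every element of $C_S(a)$ whose degree is a multiple of $n$ is reduced; for degrees not a multiple of $n$ one needs finitely many ``bridging'' generators, one for each residue class mod $n$ that is realized as a degree in $C_S(a)$. Thus $C_S(a)$ is generated as an $F[a]$-module by finitely many elements $b_0=1, b_1, \ldots, b_{n-1}$ where $b_i$ (when it exists) has degree congruent to $i$ modulo $n$ and of minimal such degree; in particular $C_S(a)$ is generated by at most $n$ elements over $F[a]$.

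Finally, to get freeness: $F$ is a field, $a$ is transcendental over $F$ (because $\deg a^k = kn$ are all distinct, no nontrivial $F$-polynomial in $a$ can vanish), so $F[a]$ is a polynomial ring in one variable over a field, hence a PID. A finitely generated module over a PID is a direct sum of a free module and a torsion module; but $C_S(a)$ is a subring of the domain $S$, hence torsion-free over the subring $F[a]\subseteq C_S(a)$. Therefore $C_S(a)$ is free over $F[a]$, and its rank is at most the number of generators, which is at most $n$.

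The main obstacle I expect is the bookkeeping in the reduction step: correctly identifying, for each residue $i\bmod n$, that there is a well-defined minimal degree $d_i$ realized in $C_S(a)$ and a generator $b_i$ of that degree, and then verifying that an arbitrary $b\in C_S(a)$ of degree $d$ with $d\equiv i\pmod n$ can be written, using the leading-coefficient hypothesis, as $\alpha\, a^{(d-d_i)/n} b_i$ plus a lower-degree element of $C_S(a)$ — i.e.\ that the leading coefficient of $a^{(d-d_i)/n} b_i$ is an $F$-multiple of that of $b$. This is exactly where the hypothesis ``leading coefficients of equal-degree centralizing elements are constant multiples of each other'' is used, and one must check it interacts correctly with how $\sigma$ and $\delta$ twist leading coefficients under multiplication; since constants are fixed by $\sigma$ and killed by $\delta$, multiplying by $a^k$ multiplies the leading coefficient by a fixed element of $R$ independent of which degree-$d_i$ element one started from, which is what makes the $F$-proportionality propagate. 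Handling the degenerate cases ($n=0$, or some residue classes never occurring) is routine.
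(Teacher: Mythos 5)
Your proposal is correct, and its core is the same as the paper's argument: you pick, for each residue class modulo $n=\deg a$ that is realized as a degree in $C_S(a)$, an element of minimal degree in that class (with $1$ representing the class of $0$), and you use the leading-coefficient hypothesis to subtract an $F$-multiple of $a^q p_i$ from any $e\in C_S(a)$ of matching degree and descend by induction; this gives at most $n$ generators over $F[a]$. Note that the hypothesis applies directly to the pair $e$ and $a^q p_i$ (two equal-degree elements of the centralizer), so the worry you raise at the end about tracking how $\sigma$ twists leading coefficients under multiplication by $a^k$ is unnecessary. Where you diverge is the freeness step: the paper shows the chosen generators are actually linearly independent over $F[a]$ by a degree argument (a nonzero $f_i\in F[a]$ has degree divisible by $n$, so $\deg(f_ip_i)\equiv i\pmod n$, and terms in distinct residue classes cannot cancel), so they form a basis outright; you instead invoke transcendence of $a$ over $F$, the PID structure theorem, and torsion-freeness of $C_S(a)\subseteq S$. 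Both routes are valid; the paper's is more elementary and self-contained and identifies an explicit basis, while yours gets freeness with less bookkeeping at the cost of quoting the classification of finitely generated modules over a PID. The commutativity of $C_S(a)$ that you assert in passing is not needed for either argument.
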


The proof we give is the same as used in \cite{GoodearlPseudo} to prove Theorem \ref{GoodearlThm}.

\begin{proof} 

Denote by $M$ the subset of elements of $\{0, 1, \ldots, n-1 \}$ such that an integer $0 \leq i < n$ is in $M$ if and only if $C_S(a)$ contains an element of degree equivalent to $i$ modulo $n$. For $i \in M$ let $p_i$ be an element in $C_S(a)$ such that 
$\deg(p_i) \equiv i   \imod n$ and $p_i$ has minimal degree for this property. Take $p_0=1$. 

We will show that $\{ p_i | i \in M \}$ is a basis for $C_S(a)$ as a $F[a]$-module. 

Since $R$ is an integral domain and $\sigma$ is injective, the degree of a product of two elements in $\OreGen$ is the sum of the degrees of the two elements. 

We start by showing that the $p_i$ are linearly independent over $F[a]$. Suppose $\sum_{i \in M} f_i p_i =0$ for some $f_i \in F[a]$. If $f_i \neq 0$ then $\deg(f_i)$ is divisible by $n$, in which case 
\begin{align}
 \deg(f_i p_i)= \deg(f_i)+\deg(p_i) \equiv \deg(p_i) \equiv i  \imod n. 
\end{align}
If $\sum_{i \in M} f_i p_i=0$ but not all $f_i$ are zero, we must have two nonzero terms, $f_i p_i$ and $f_j p_j$, that have the same degree despite $i,j \in M$ being distinct. But this is impossible since $ i \not\equiv j 
\imod n$. 

We now proceed to show that the $p_i$ span $C_S(a)$. Let $W$ denote the submodule they do span. We use induction on the degree to show that all elements of $C_S(a)$ belong to $W$. If $e$ is an element of degree $0$ in 
$C_S(a)$ we find by
the hypothesis on $a$ applied to $e$ and $p_0=1$ that $e = \alpha $ for some $\alpha \in F$. Thus $e \in W$. 

Now assume that $W$ contains all elements in $C_S(a)$ of degree less than $j$. Let $e$ be an element in $C_S(a)$ of degree $j$. There is some $i$ in $M$ such that $j \equiv i \imod n$. Let $m$ be the degree of $p_i$. By the 
choice of $p_i$ we now that $m \equiv j \imod n$ and $m \leq j$. Thus $j= m+qn$ for some non-negative integer $q$. The element $a^q p_i$ lies in $W$ and has degree $j$. By hypothesis, the leading coefficient of $e$ equals 
the leading coefficient of $a^q p_i$ times some constant $\alpha$. The element $e-\alpha a^q p_i$  then lies in $C_S(a)$ and has degree less than $j$. By the induction hypothesis it also lies in $W$, and hence so does $e$. 
\end{proof}

We aim to use Theorem \ref{thm_Centralizer} when $R=k[y]$. To that end we have obtained the following proposition.

\begin{prop}\label{conj_main}
 Let $k$ be a field and set $R=k[y]$. Let $\sigma$ be an endomorphism of $R$ such that $\sigma(\alpha)=\alpha$ for all $\alpha \in k$ and $\sigma(y)=p(y)$, where $p(y)$ is a polynomial of degree (in $y$) greater than $1$. 
Let $\delta$ be a $\sigma$-derivation such that $\delta(\alpha)=0$ for all $\alpha \in k$. Form the Ore extension $S=\OreGen$. We note that its ring of constants is $k$. 
Let $a \neq 0$ be an element of $\OreGen$. Assume that $b,c$ are elements of $S$ such that $\deg(b)=\deg(c)=m$ (here the degree is taken with respect to $x$) and $b,c$ both belong to $C_S(a)$. Then $b_m = \alpha c_m$, where
$b_m,c_m$ are the leading coefficients of $b$ and $c$ respectively, and $\alpha$ is some constant. 
\end{prop}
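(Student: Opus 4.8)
The strategy is to squeeze a single functional equation for the ratio $b_m/c_m$ out of the two commutation relations, working inside the field $k(y)$, and then to show that the only rational functions satisfying that equation are the nonzero scalars.

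First I would record the structural facts. For $f\in R=k[y]$ one has $\deg_y\sigma(f)=\deg_y(p)\cdot\deg_y(f)$, so $\sigma$ is injective; since $R$ is moreover an integral domain, for $P,Q\in S=\OreGen$ the $x$-degrees add under multiplication and the leading coefficient of $PQ$ equals $P_{\deg P}\,\sigma^{\deg P}(Q_{\deg Q})$, the $\delta$-terms only lowering the $x$-degree. In particular $\delta$ never enters the argument; the hypothesis on $\delta$ is used only through the already-noted fact that the ring of constants of $S$ is $k$. Write $a=\sum_{i=0}^{n}a_i x^i$ with $a_n\ne 0$; since the proposition is applied only via Theorem~\ref{thm_Centralizer} to $a$ of positive degree, I take $n=\deg(a)\ge 1$ (the argument below needs it; note the statement does become false for $\deg(a)=0$, e.g.\ $a=y$).

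Next, comparing the coefficients of $x^{n+m}$ in $ab=ba$ gives $a_n\,\sigma^n(b_m)=b_m\,\sigma^m(a_n)$, and from $ac=ca$ similarly $a_n\,\sigma^n(c_m)=c_m\,\sigma^m(a_n)$. Eliminating $\sigma^m(a_n)$ between these (legitimate since $a_n\ne 0$ in the domain $R$) yields $\sigma^n(b_m)\,c_m=\sigma^n(c_m)\,b_m$. Extend $\sigma$ to an injective endomorphism $\widetilde\sigma$ of $k(y)$ by $\widetilde\sigma(f/g)=\sigma(f)/\sigma(g)$ — well defined precisely because $\sigma$ is injective — and put $t:=b_m/c_m\in k(y)^{\times}$. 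Then the last identity says exactly $\widetilde\sigma^{\,n}(t)=t$. It now remains to prove the key lemma: every $t\in k(y)$ with $\widetilde\sigma^{\,n}(t)=t$ lies in $k$. Since $\sigma$ acts by $f(y)\mapsto f(p(y))$, the map $\widetilde\sigma^{\,n}$ is substitution by $q:=p\circ\cdots\circ p$, the composite of $n$ copies of $p$, a polynomial with $\deg_y q=(\deg_y p)^n\ge 2$. Writing $t=f/g$ in lowest terms, one checks that $f(q)$ and $g(q)$ are again coprime: a common factor would, after base change to an algebraic closure of $k$ (over which $\gcd$ is unchanged), have a root $\beta$, and then $q(\beta)$ would be a common root of $f$ and $g$, contradicting $\gcd(f,g)=1$. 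Hence $\widetilde\sigma^{\,n}(t)=f(q)/g(q)$ is in lowest terms and has rational-function degree $\deg_y q\cdot\max(\deg_y f,\deg_y g)$ (equivalently, the degree of a rational function is multiplicative under composition). Combined with $\widetilde\sigma^{\,n}(t)=t$ and $\deg_y q>1$, this forces $\max(\deg_y f,\deg_y g)=0$, i.e.\ $t\in k$. Since $k$ is exactly the ring of constants, $\alpha:=t$ is a constant with $b_m=\alpha c_m$, as required.

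The only genuine content is this last lemma, and the step I expect to need the most care is checking that composing with $q$ preserves coprimality and produces no cancellation, so that the degree of $\widetilde\sigma^{\,n}(t)$ really is $\deg_y q$ times that of $t$; once that is in hand, $\deg_y q\ge 2$ makes the conclusion immediate. One should also note that the case $m=0$ requires no separate treatment — the coefficient comparison is verbatim the same — so the whole proof is: one leading-coefficient identity, one elimination, and the fixed-point lemma for $\widetilde\sigma$ on $k(y)$.
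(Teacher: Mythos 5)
Your proposal is correct and follows essentially the same route as the paper: compare leading coefficients of $ab=ba$ and $ac=ca$, pass to the quotient field of $k[y]$ to get $\sigma^n(b_m)/\sigma^n(c_m)=b_m/c_m$, and then show that a rational function fixed by substitution of a polynomial of degree $>1$ must be a scalar, using that composition preserves coprimality (checked over an algebraic closure). Your degree-counting finish is only a minor variant of the paper's divisibility argument ($f \mid f\circ p$, $g \mid g\circ p$, equal and hence constant quotients), and your observation that the statement silently requires $\deg(a)\ge 1$ --- it fails for $a=y$ --- is a correct caveat that the paper's proof also implicitly assumes.
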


The author wishes to thank Fredrik Ekstr\"om for contributing a crucial idea to the following proof. 

\begin{proof}
Let  $a_n$ be the leading coefficient of $a$. By comparing the leading coefficient of $ab$ and $ba$ we see that
\begin{align}\label{eq_1}
 a_n \sigma^n(b_m) = b_m \sigma^m(a_n).
\end{align}
Similarly 
\begin{align} \label{eq_2}
 a_n \sigma^n(c_m) = c_m \sigma^m(a_n).
\end{align}

By dividing Equation \ref{eq_1} by Equation \ref{eq_2} we see that 
\begin{align}\label{eq_3}
\frac{ \sigma^n(b_m) }{ \sigma^n(c_m)} = \frac{b_m }{ c_m}. 
\end{align}
We can perform such a division by passing to the quotient field of $k[y]$.
 
It thus suffices to prove that if $f,g,p$ are polynomials in $k[y]$, with $\deg(p)>1$, and 
\begin{align}\label{eq_4}
 f(y)g(p(y))=f(p(y))g(y),
\end{align}
then $f(y)=\alpha g(y)$ for some $\alpha \in k$. 

So suppose that such $f,g$ and $p$ are given. We will also assume that $k$ is algebraically closed, which can be done without loss of generality. 
 If $f$ and $g$ have a common factor $h$ we write $f(y)= h(y) \hat{f}(y)$ and similarly for $g$. We find that
\begin{align}
 \hat{f}(y)h(y) h(p(y)) \hat{g}(p(y)) &=\hat{f}(p(y)) h(p(y)) h(y) \hat{g}(y) \\ \Rightarrow \hat{f}(y)\hat{g}(p(y)) &=\hat{f}(p(y)) \hat{g}(y). 
\end{align}
So we can assume without loss of generality that $f$ and $g$ are co-prime. It follows that the composite polynomials $f \circ p$ and $g \circ p$ are also co-prime. For if $f \circ p$ and $g \circ p$ had the common factor
$l(y)$ it would follow that $f \circ p$ and $g \circ p$ had a common zero since $k$ is algebraically closed. This would imply that $f$ and $g$ had a common zero, contradicting their co-primeness. 

From Equation \ref{eq_4} we see that $f$ must divide $f \circ p$ and $g$ must divide $g \circ p$ . So write $f(p(y)) = e(y) f(y)$ and $g(p(y))= \hat{e}(y) g(y)$. From \ref{eq_4} we see that $e=\hat{e}$. But this implies that 
$e$ is a constant polynomial, since otherwise $f \circ p$ and $g \circ p$ would be co-prime. On the other hand $\deg(f \circ p) = \deg(p) \cdot \deg(f)$, which is a contradiction unless $\deg(f) =0$. The proposition follows.  
\end{proof}

\begin{prop}
 Let $k,\sigma,\delta,a$ be as in Proposition \ref{conj_main}. Then $C_S(a)$ is a free $k[a]$-module of finite rank. 
\end{prop}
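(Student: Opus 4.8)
The plan is to derive this as a corollary of Theorem~\ref{thm_Centralizer}, taking the field of constants to be $F = k$. The genuine mathematical content is already contained in Proposition~\ref{conj_main}, so the only real tasks are to check that every hypothesis of Theorem~\ref{thm_Centralizer} is met by the data $(R,\sigma,\delta,a)$ of Proposition~\ref{conj_main}, and then to dispose of the degenerate case in which $a$ has $x$-degree zero.

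First I would verify the hypotheses of Theorem~\ref{thm_Centralizer}. The ring $R = k[y]$ is an integral domain. The endomorphism $\sigma$ is injective: since $\sigma$ fixes $k$ and $\sigma(y) = p(y)$, it acts by $\sigma(f(y)) = f(p(y))$, and as $\deg p \ge 1$ the multiplicativity $\deg(f \circ p) = \deg(f)\deg(p)$ shows $f \circ p = 0$ only when $f = 0$. The map $\delta$ is a $\sigma$-derivation by assumption, and Proposition~\ref{conj_main} already records that the ring of constants of $S = \OreGen$ is $k$, which is a field. Finally, the remaining hypothesis of Theorem~\ref{thm_Centralizer} — that any two elements of $C_S(a)$ of a common $x$-degree $m$ have leading coefficients differing by a constant factor — is exactly the conclusion of Proposition~\ref{conj_main}.

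Granting these checks, suppose first that $\deg_x a = n \ge 1$. Then Theorem~\ref{thm_Centralizer} applies directly and yields that $C_S(a)$ is a free $k[a]$-module of rank at most $n$, hence of finite rank, as desired. In the remaining case $\deg_x a = 0$ we have $a \in k[y] \setminus \{0\}$; setting aside the trivial sub-case $a \in k$ (where $k[a] = k$ and $C_S(a) = S$), I would argue by hand. If $b = \sum_{i=0}^m b_i x^i$ lies in $C_S(a)$ with $b_m \ne 0$, then comparing the coefficients of $x^m$ in $ab$ and $ba$ gives $a b_m = b_m \sigma^m(a)$, so $a = \sigma^m(a)$ since $k[y]$ is a domain; but for $m \ge 1$ one has $\deg_y \sigma^m(a) = (\deg p)^m \deg_y(a) > \deg_y(a)$, a contradiction. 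Hence $m = 0$, so $C_S(a) = k[y]$, which is a free $k[a]$-module of rank $\deg_y(a) < \infty$.

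I do not anticipate a serious obstacle: once Proposition~\ref{conj_main} and Theorem~\ref{thm_Centralizer} are available the result is essentially immediate, so it is really a corollary. The only points that demand a little care are confirming that the leading-coefficient condition required by Theorem~\ref{thm_Centralizer} is literally what Proposition~\ref{conj_main} proves (both are phrased in terms of $x$-degrees and of constants of the pair $(\sigma,\delta)$), verifying the injectivity of $\sigma$, and observing explicitly that Theorem~\ref{thm_Centralizer} is invoked only when $\deg_x a \ge 1$, so that the $x$-degree-zero case must be treated separately as above.
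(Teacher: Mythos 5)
Your proposal is correct and follows exactly the paper's route: the paper's own proof is the one-line remark that the statement ``follows directly from Theorem~\ref{thm_Centralizer}'' (with Proposition~\ref{conj_main} supplying the leading-coefficient hypothesis), which is precisely your main case. Your separate treatment of $\deg_x a = 0$ is a sensible refinement the paper silently skips, since the proof of Theorem~\ref{thm_Centralizer} implicitly needs $n \ge 1$; note only that in the sub-case $a \in k$, which you set aside, $C_S(a) = S$ has infinite rank over $k[a] = k$, so the statement must be read as tacitly excluding constants.
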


\begin{proof}
 This follows directly from Theorem \ref{thm_Centralizer}.
\end{proof}

The following theorem, which as far as the author knows is a new result, follows from what we proved above.  

\begin{theorem}\label{thm_Rich}
 Let $k$ be a field. Let $\sigma$ be an endomorphism of $k[y]$ such that $\sigma(y) =p(y)$, where $\deg(p) >1$, and let $\delta$ be a $\sigma$-derivation. Suppose that $\sigma(\alpha)=\alpha$ and $\delta(\alpha)=0$ for all
$\alpha \in k$. Let $a,b$ be two commuting elements of $k[y][x;\sigma,\delta]$. Then there is a nonzero polynomial $f(s,t) \in k[s,t]$ such that $f(a,b) =0$. 
\end{theorem}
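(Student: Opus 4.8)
The plan is to obtain Theorem \ref{thm_Rich} as a corollary of Proposition \ref{conj_main} and the (unlabelled) proposition immediately following it, by the same argument that yields Corollary \ref{cor_Amitsur} from Theorem \ref{thm_DiffField}.

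First I would dispose of the degenerate cases. If $a = 0$, take $f(s,t) = s$; if $b = 0$, take $f(s,t) = t$. If $a$ and $b$ are both nonzero but both have $x$-degree $0$, then both lie in the subring $R = k[y]$ of $S = k[y][x;\sigma,\delta]$, which is an integral domain of transcendence degree $1$ over $k$; hence $a$ and $b$ are algebraically dependent over $k$, as desired. (In contrast to the differential operator ring case, a degree-$0$ element of $S$ need not be a constant, so the shortcut $f(s,t) = s - a$ is not available here.) After possibly swapping $a$ and $b$ (and, correspondingly, the roles of $s$ and $t$ at the end) we may therefore assume that $a$ and $b$ are nonzero and that $\deg(a) = n \geq 1$.

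In this case the hypotheses of Proposition \ref{conj_main} hold verbatim: $\sigma$ fixes $k$ and $\sigma(y) = p(y)$ with $\deg p > 1$, $\delta$ kills $k$, the ring of constants is $k$ (a field), and $\sigma$ is injective because $p$ is nonconstant. Consequently the proposition following Proposition \ref{conj_main} applies and $C_S(a)$ is a free $k[a]$-module of finite rank, say $N$ (with $N \leq n$ by Theorem \ref{thm_Centralizer}). Since $a$ is by definition central in $C_S(a)$, the subring $k[a]$ lies in the centre of $C_S(a)$, so the module structure is over a central subring and the ensuing linear-algebra step is legitimate; moreover $b \in C_S(a)$ forces $b^i \in C_S(a)$ for every $i \geq 0$ by associativity alone, so no commutativity of $C_S(a)$ is needed. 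The $N + 1$ elements $1, b, b^2, \ldots, b^N$ of this rank-$N$ free module over $k[a]$ are therefore linearly dependent: there exist $\phi_0(a), \ldots, \phi_N(a) \in k[a]$, not all zero, with $\sum_{i=0}^{N} \phi_i(a)\, b^i = 0$. (Here $k[a]$ may be taken to be a genuine polynomial ring in $a$, since if $a$ satisfied a nonzero polynomial over $k$ we would already be done by a degree comparison in $x$.) Setting $f(s,t) = \sum_{i=0}^{N} \phi_i(s)\, t^i \in k[s,t]$ gives a nonzero polynomial with $f(a,b) = 0$, the expression $f(a,b)$ being unambiguous because $a$ and $b$ commute.

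I do not anticipate a real obstacle at this stage: the substantive content lies entirely in Proposition \ref{conj_main} — whose proof reduces, after comparing leading coefficients of $ab$ and $ba$, to showing that $f(y)\,g(p(y)) = f(p(y))\,g(y)$ forces $f = \alpha g$ when $\deg p > 1$ — and in Theorem \ref{thm_Centralizer}. The only points demanding a little care are the low-degree bookkeeping described above and the observation that $k[a]$ is central in $C_S(a)$.
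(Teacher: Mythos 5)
Your proposal is correct and is essentially the paper's own proof, which likewise deduces the theorem from Theorem \ref{thm_Centralizer} and Proposition \ref{conj_main} via the linear-dependence argument of Corollary \ref{cor_Amitsur}. Your explicit treatment of the degenerate cases (zero elements, and the case where both $a$ and $b$ have $x$-degree $0$, handled via the transcendence degree of $k[y]$ over $k$) is a welcome bit of extra care that the paper's one-line proof leaves implicit.
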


\begin{proof}
 Using the reasoning in the proof of Corollary \ref{cor_Amitsur} this follows from Theorem \ref{thm_Centralizer} and Proposition \ref{conj_main}.
\end{proof}

Note that the center of $k[y][x;\sigma,\delta]$ coincides with $k$ and thus we have a parallel with, for example, Corollary \ref{cor_Amitsur}. We would like to generalize Theorem \ref{thm_Rich} to obtain general conditions under 
which two commuting elements of $S=\OreGen$ are algebraically dependent over the center of $S$. An example of a result in that direction can be found in \cite{HSbook} where the authors prove the following theorem.

\begin{theorem}[\cite{HSbook}, Theorem 7.5]
Let $R=k[y]$, $\sigma(y)=qy$ and $\delta(y)=1$, where $q \in k$ and $q$ is a root of unity.  Form $S=\OreGen$ and let $C$ be the center of $S$. If $a,b$ are commuting elements of $S$ then there is a nonzero polynomial 
$f(s,t) \in C[s,t]$ such that $f(a,b)=0$.  
\end{theorem}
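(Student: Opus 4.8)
\section*{Proof proposal for \texorpdfstring{\cite{HSbook}}{HSbook}, Theorem 7.5}

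The plan is to exploit the fact that, when $q$ is a root of unity, the centre $C$ of $S$ is large --- large enough that $S$ is a finitely generated module over a central polynomial subring, so that every element of $S$ is integral over $C$ and the conclusion becomes a routine fact from commutative algebra. First I would record, by an easy induction from the defining relation $xy = qyx + 1$, the commutation formulas
\begin{displaymath}
 x^\ell y = q^\ell y x^\ell + [\ell]_q\, x^{\ell-1}, \qquad x y^\ell = q^\ell y^\ell x + [\ell]_q\, y^{\ell-1},
\end{displaymath}
where $[\ell]_q := 1 + q + \cdots + q^{\ell-1}$. Next I would fix $\ell \geq 1$ with $q^\ell = 1$ and $[\ell]_q = 0$ in $k$: if $q \neq 1$ take $\ell$ to be the multiplicative order of $q$, so that $(q-1)[\ell]_q = q^\ell - 1 = 0$ forces $[\ell]_q = 0$; if $q = 1$ and $\CHAR(k) = p > 0$ take $\ell = p$, so that $[\ell]_q = p\cdot 1_k = 0$. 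For such $\ell$ the formulas above show that $x^\ell$ and $y^\ell$ commute with the generators $x$, $y$ of $S$ (and trivially with the scalars $k$), whence $Z_0 := k[x^\ell, y^\ell] \subseteq C$.

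The key structural step is then the observation that $S$ is generated as a $Z_0$-module by the $\ell^2$ monomials $\{\, y^i x^j : 0 \leq i, j < \ell \,\}$: this is immediate from the $k$-basis $\{\, y^i x^j : i, j \geq 0 \,\}$ of $S$ together with the centrality of $x^\ell$ and $y^\ell$, which lets one split off powers of $x^\ell$ and $y^\ell$ from an arbitrary basis monomial. Since $Z_0$ is a Noetherian commutative ring (a finitely generated $k$-algebra), $S$ is a Noetherian $Z_0$-module, so every $Z_0$-submodule of $S$ is finitely generated. Applying this to the $Z_0$-submodule $Z_0[a]$ shows $a$ is integral over $Z_0$, i.e.\ there is a monic $P(s) \in Z_0[s]$ with $P(a) = 0$; then $f(s,t) := P(s) \in Z_0[s,t] \subseteq C[s,t]$ is nonzero and $f(a,b) = P(a) = 0$. (This argument does not really use that $a$ and $b$ commute; commutativity only guarantees that substitutions like $f(a,b)$ are unambiguous in general.)

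The one case this misses is $q = 1$ with $\CHAR(k) = 0$, in which $S$ is the ordinary Weyl algebra over $k$, the centre is just $k$, and no suitable $\ell$ exists. This is not of the ``large centre'' type and is handled separately, but it is already subsumed by the earlier results: if $\deg_x a \geq 1$ it follows from Mazorchuk's Theorem~\ref{thm_Maz} (applicable since $\sigma = \identity$, $\delta(y) = 1$, and $\sum_{i=0}^N 1 = N+1 \neq 0$ in characteristic zero) by the argument of Corollary~\ref{cor_Amitsur}; if $\deg_x a = 0$ then $a \in k[y]$, one checks that $C_S(a) \subseteq k[y]$, and so $a,b$ lie in the transcendence-degree-one $k$-algebra $k[y]$ and are algebraically dependent over $k$.

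I do not expect a serious obstacle here: the only place an actual computation is required is the verification that $x^\ell$ and $y^\ell$ are central, and everything afterwards is a formal consequence of module-finiteness over a Noetherian base. The main points to be careful about are therefore bookkeeping ones --- choosing $\ell$ correctly in positive characteristic, and explicitly dispatching the residual characteristic-zero Weyl algebra case.
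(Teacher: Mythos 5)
The paper never proves this theorem --- it is quoted verbatim from [HSbook, Theorem 7.5] as an illustration --- so there is no in-paper argument to measure you against; your proof has to stand on its own, and it does. The commutation formulas $x^\ell y = q^\ell yx^\ell + [\ell]_q x^{\ell-1}$ and $xy^\ell = q^\ell y^\ell x + [\ell]_q y^{\ell-1}$ are correct, your choice of $\ell$ really does force $q^\ell=1$ and $[\ell]_q=0$ in every case except $q=1$ with $\CHAR(k)=0$, and the chain (centrality of $x^\ell,y^\ell$) $\Rightarrow$ ($S$ is a finite module over the commutative Noetherian ring $k[x^\ell,y^\ell]\subseteq C$, via the basis $\{y^ix^j\}$) $\Rightarrow$ (every element of $S$ is integral over $C$) is sound. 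Taking $f(s,t)=P(s)$ with $P$ the monic annihilator of $a$ is a legitimate, if degenerate, answer to the question as posed, and the degeneracy is exactly consistent with the paper's closing remark that $x^n$ and $y^n$ are central but not algebraically dependent over $k$: at a root of unity the honest statement is dependence over $C$, and your argument makes the reason transparent in a way the Burchnall--Chaundy-style determinant or centralizer arguments elsewhere in the paper do not (note that Theorem \ref{thm_Centralizer} is unavailable here, since its leading-coefficient hypothesis fails precisely because the centre is large). Your handling of the leftover case $q=1$, $\CHAR(k)=0$ via Theorem \ref{thm_Maz} and the rank argument of Corollary \ref{cor_Amitsur} is also fine; the only nit is that the claim $C_S(a)\subseteq k[y]$ for $\deg_x a=0$ requires $a\notin k$, but for $a\in k$ one simply takes $f(s,t)=s-a$, and for $a=0$ one takes $f(s,t)=s$.
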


This theorem can not be strengthened to give algebraic dependence over $k$. Indeed, suppose that $q^n=1$. One can check that $x^n$ and $y^n$ commute (in fact they both belong to the center) but they are not algebraically dependent 
over $k$. 

\section*{Acknowledgement}

The author wishes to thank Fredrik Ekstr\"om, Johan \"Oinert and Sergei Silvestrov for helpful comments. 

The author is also grateful to the Dalhgren and Lanner foundations for support. This research was also supported in part by
the Swedish Foundation for International Cooperation in Research and Higher Education (STINT),
The Swedish Research Council (grant no. 2007-6338), The Royal Swedish Academy of Sciences, The Crafoord
Foundation and the NordForsk Research Network ``Operator Algebras and Dynamics'' (grant no. 11580).

\providecommand{\bysame}{\leavevmode\hbox to3em{\hrulefill}\thinspace}
\providecommand{\href}[2]{#2}

\end{document}